\numberwithin{equation}{section}
\begin{document}
	\theoremstyle{plain}
	\newtheorem{thm}{Theorem}[section]
	\newtheorem{lem}[thm]{Lemma}
	\newtheorem{cor}[thm]{Corollary}
	\newtheorem{cor*}[thm]{Corollary*}
	\newtheorem{prop}[thm]{Proposition}
	\newtheorem{prop*}[thm]{Proposition*}
	\newtheorem{conj}[thm]{Conjecture}
	%%%%%%%%%%%%%%%%%%%% Text roman %%%%%%%%%%%%%%%%%%%%%%%%%%%%%
	\theoremstyle{definition}
	\newtheorem{construction}{Construction}
	\newtheorem{notations}[thm]{Notations}
	\newtheorem{question}[thm]{Question}
	\newtheorem{prob}[thm]{Problem}
	\newtheorem{rmk}[thm]{Remark}
	\newtheorem{remarks}[thm]{Remarks}
	\newtheorem{defn}[thm]{Definition}
	\newtheorem{claim}[thm]{Claim}
	\newtheorem{assumption}[thm]{Assumption}
	\newtheorem{assumptions}[thm]{Assumptions}
	\newtheorem{properties}[thm]{Properties}
	\newtheorem{exmp}[thm]{Example}
	\newtheorem{comments}[thm]{Comments}
	\newtheorem{blank}[thm]{}
	\newtheorem{observation}[thm]{Observation}
	\newtheorem{defn-thm}[thm]{Definition-Theorem}
	\newtheorem*{Setting}{Setting}

	\newcommand{\sA}{\mathscr{A}}
	\newcommand{\sB}{\mathscr{B}}
	\newcommand{\sC}{\mathscr{C}}
	\newcommand{\sD}{\mathscr{D}}
	\newcommand{\sE}{\mathscr{E}}
	\newcommand{\sF}{\mathscr{F}}
	\newcommand{\sG}{\mathscr{G}}
	\newcommand{\sH}{\mathscr{H}}
	\newcommand{\sI}{\mathscr{I}}
	\newcommand{\sJ}{\mathscr{J}}
	\newcommand{\sK}{\mathscr{K}}
	\newcommand{\sL}{\mathscr{L}}
	\newcommand{\sM}{\mathscr{M}}
	\newcommand{\sN}{\mathscr{N}}
	\newcommand{\sO}{\mathscr{O}}
	\newcommand{\sP}{\mathscr{P}}
	\newcommand{\sQ}{\mathscr{Q}}
	\newcommand{\sR}{\mathscr{R}}
	\newcommand{\sS}{\mathscr{S}}
	\newcommand{\sT}{\mathscr{T}}
	\newcommand{\sU}{\mathscr{U}}
	\newcommand{\sV}{\mathscr{V}}
	\newcommand{\sW}{\mathscr{W}}
	\newcommand{\sX}{\mathscr{X}}
	\newcommand{\sY}{\mathscr{Y}}
	\newcommand{\sZ}{\mathscr{Z}}
	\newcommand{\bZ}{\mathbb{Z}}
	\newcommand{\bN}{\mathbb{N}}
	\newcommand{\bQ}{\mathbb{Q}}
	\newcommand{\bC}{\mathbb{C}}
	\newcommand{\bR}{\mathbb{R}}
	\newcommand{\bH}{\mathbb{H}}
	\newcommand{\bD}{\mathbb{D}}
	\newcommand{\bE}{\mathbb{E}}
	\newcommand{\bP}{\mathbb{P}}
	\newcommand{\bV}{\mathbb{V}}
	\newcommand{\cV}{\mathcal{V}}
	\newcommand{\cF}{\mathcal{F}}
	\newcommand{\cI}{\mathcal{I}}
	\newcommand{\cW}{\mathcal{W}}
	\newcommand{\cU}{\mathcal{U}}
	\newcommand{\bfM}{\mathbf{M}}
	\newcommand{\bfN}{\mathbf{N}}
	\newcommand{\bfX}{\mathbf{X}}
	\newcommand{\bfY}{\mathbf{Y}}
	\newcommand{\spec}{\textrm{Spec}}
	\newcommand{\dbar}{\bar{\partial}}
	\newcommand{\ddbar}{\partial\bar{\partial}}
	\newcommand{\redref}{{\color{red}ref}}
	%%%%%%%%%%%%%%%%%%%%%%%%%%%%%%%%%%%%%%%%%%%%%%%%%%%%%%%%%%%%%%
	
	\title[Stratified Hyperbolicity of the moduli stack of stable minimal models] {Stratified Hyperbolicity of the moduli stack of stable minimal models, II: Big Picard Theorem and the stratified Brody hyperbolicity}
	
	\author[Junchao Shentu]{Junchao Shentu}
	\email{stjc@ustc.edu.cn}
	\address{School of Mathematical Sciences,
		University of Science and Technology of China, Hefei, 230026, China}
	%\author[Chen Zhao]{Chen Zhao}
	%\email{czhao@ustc.edu.cn}
	%\address{School of Mathematical Sciences,
		%	University of Science and Technology of China, Hefei, 230026, China}

	%%%%%%%%%%%%%%%%%%%%%%%%%%%%%%%%%%%%%%%%%%%%%%%%%%%%%%%%%%%%%%%%%%%%%%%%%%%%%%%%%%%%%%%%%%%%%%%%
	%                                          Abstract                                            %
	%%%%%%%%%%%%%%%%%%%%%%%%%%%%%%%%%%%%%%%%%%%%%%%%%%%%%%%%%%%%%%%%%%%%%%%%%%%%%%%%%%%%%%%%%%%%%%%%
	\begin{abstract}
		This is the second paper in the series on the global geometry of Birkar's moduli space of stable minimal models (e.g., the KSBA moduli stack). We introduce a birationally admissible stratification of the Deligne-Mumford stack of stable minimal models, such that the universal family over each stratum admits a simple normal crossing log birational model. The main result of this paper is to show that each stratum $\sS$ is Picard hyperbolic, Borel hyperbolic, and Brody hyperbolic in a certain sense.
	\end{abstract}
	
	\maketitle
	%\tableofcontents
	
	\section{Introduction}
	This is the second paper in a series of studies that investigates the global geometry of C. Birkar's moduli stack of stable minimal models and the KSBA moduli stack, building upon the results presented in \cite{stjc2025}. For $d \in \bN$, $c \in \bQ^{\geq 0}$, a finite set $\Gamma \subset \bQ^{> 0}$, and $\sigma \in \bQ[t]$, let $\sM_{\rm slc}(d,\Phi_c,\Gamma,\sigma)$ denote the Deligne-Mumford stack of $(d,\Phi_c,\Gamma,\sigma)$-stable minimal models as introduced by Birkar in \cite{Birkar2022}. Additionally, let $\Delta := \{z \in \bC \mid |z| < 1\}$ represent the open unit disk in the complex plane, and let $\Delta^\ast := \Delta \setminus \{0\}$ denote the punctured unit disk.
	The main result of this paper is the following theorem:  
	\begin{thm}\label{thm_main}  
		Let $f^o:(X^o,B^o),A^o \to S^o$ be a strictly birationally admissible family of $(d,\Phi_c,\Gamma,\sigma)$-stable minimal models over a quasi-projective variety $S^o$, defining a quasi-finite classifying map $\xi^o:S^o \to \sM_{\rm slc}(d,\Phi_c,\Gamma,\sigma)$. Then the following statements hold:  
		\begin{itemize}  
			\item \emph{(Big Picard Theorem)} Let $S$ be a projective variety containing $S^o$ as a Zariski open subset. Then $(S, S \setminus S^o)$ forms a Picard pair, meaning that for any holomorphic map $\gamma: \Delta^\ast \to S$ from the punctured unit disk $\Delta^\ast$, either $0 \in \overline{\gamma^{-1}(S \setminus S^o)}$, or $\gamma$ can be extended to a holomorphic map $\overline{\gamma}: \Delta \to S$. This generalizes the classical Big Picard theorem, which asserts that $(\bP^1, \{0, 1, \infty\})$ is a Picard pair.  
			\item \emph{(Borel Hyperbolicity)} Any holomorphic map from a finite type reduced scheme to $S^o$ is algebraic.  
			\item \emph{(Brody Hyperbolicity)} $S^o$ is Brody hyperbolic, i.e., there exists no non-constant holomorphic map $\bC \to S^o$.  
		\end{itemize}  
	\end{thm}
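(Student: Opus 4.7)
\medskip

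\noindent\textbf{Proof proposal.} My plan is to deduce all three hyperbolicity statements from a single source: a Finsler pseudo-metric on $S^o$ with holomorphic sectional curvature bounded above by a negative constant, whose non-degeneracy at every point is guaranteed by the quasi-finiteness of the classifying map $\xi^o$.

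First, I would use the hypothesis of strict birational admissibility to choose a log smooth compactification $(\overline{S},D)$ of $S^o$ (with $D$ a simple normal crossing divisor) together with a simple normal crossing log birational model of $f^o:(X^o,B^o),A^o\to S^o$ over $(\overline{S},D)$. This is precisely the birationally admissible stratification machinery set up in the abstract, and it reduces the theorem to a statement about a smooth log pair. The cohomology of the fibers (twisted by the appropriate multiple of $K_{X/S}+B+tA$, for suitable $t$) carries a graded polarized variation of mixed Hodge structure over a dense open subset, and the companion paper \cite{stjc2025} should provide a Viehweg--Zuo type Hodge sheaf on $(\overline{S},D)$ attached to this family.

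Second, following the Deng--Lu--Sun--Zuo / Popa--Schnell--Viehweg construction, the Viehweg--Zuo sheaf together with Griffiths transversality yields a Finsler pseudo-metric $h$ on $T_{\overline{S}}(-\log D)|_{S^o}$ whose holomorphic sectional curvature along every non-zero tangent vector at which $h$ is positive is bounded above by $-1$. The quasi-finiteness of $\xi^o:S^o\to\sM_{\rm slc}(d,\Phi_c,\Gamma,\sigma)$ forces the infinitesimal Torelli-type map coming from the variation of Hodge structures to be injective on $T_{S^o}$ (up to saturation by the strata structure), and hence upgrades $h$ to a genuine Finsler metric: $h(v)>0$ for every $0\neq v\in T_{S^o,x}$, for every $x\in S^o$.

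Third, the three conclusions follow from standard consequences of such a negatively curved Finsler metric on a log pair. Brody hyperbolicity is immediate from the Ahlfors--Schwarz lemma: any holomorphic $\bC\to S^o$ pulls $h$ back to a metric on $\bC$ of curvature bounded above by $-1$, which is impossible unless the map is constant. The Big Picard theorem follows from the logarithmic extension theorem of Noguchi (and its Finsler refinement by Deng): a negatively curved Finsler metric on $T_{\overline{S}}(-\log D)|_{S^o}$ that is non-degenerate on $S^o$ implies that $(\overline{S},\overline{S}\setminus S^o)$, and hence any projective $(S,S\setminus S^o)$ obtained from $\overline{S}$ by further blow-downs, is a Picard pair. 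Finally, Borel hyperbolicity is deduced from Picard hyperbolicity by the Javanpeykar--Kucharczyk principle: a Picard pair is Borel hyperbolic, so every holomorphic map from a finite type reduced scheme to $S^o$ is algebraic.

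The main obstacle I anticipate is in the second step, where one must produce a Finsler metric whose negativity persists uniformly up to the boundary $D$ and which is non-degenerate at every point of $S^o$ (not merely generically). Uniform negative curvature near $D$ requires a careful analysis of the Hodge norm estimates on the graded pieces near the boundary of the moduli, using the nilpotent and $SL_2$-orbit theorems in the mixed setting; non-degeneracy at every point of $S^o$ requires that the infinitesimal variation of Hodge structure detect all of $T_{S^o}$, which is exactly where quasi-finiteness of $\xi^o$ combined with the stratified structure from \cite{stjc2025} must be invoked. Once these two inputs are secured, the passage to Brody, Picard, and Borel hyperbolicity is a turn-the-crank application of the Schwarz lemma, Noguchi's extension theorem, and the Javanpeykar--Kucharczyk reduction.
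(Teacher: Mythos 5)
\noindent Your overall blueprint---Viehweg--Zuo Higgs sheaf, a negatively curved Finsler pseudo-metric, then Schwarz/Noguchi/Javanpeykar--Kucharczyk---matches the broad strategy of the paper, but there is a genuine gap at the step you yourself flag as the main obstacle, and the paper is structured precisely to avoid it.

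You propose to build a Finsler metric $h$ on $T_{\overline{S}}(-\log D)|_{S^o}$ that is non-degenerate at \emph{every} point of $S^o$, invoking the quasi-finiteness of $\xi^o$ to upgrade the Higgs-theoretic pseudo-metric to a genuine metric. This is too strong and is not what quasi-finiteness gives you: a quasi-finite classifying map can be ramified, so the infinitesimal period map $\rho_{m_0,1}$ coming from the Higgs sheaf need only be \emph{generically} injective on $T_{S^o}$ (this is the content of \cite[Theorem D]{Deng2022}, cited in Section \ref{section_VZ_Finsler_metric}). Nothing in the birationally-admissible setup forces pointwise injectivity, and the paper never claims or needs it. Because of this, the single uniformly negatively curved Finsler metric you want is not available; its pullback to a curve $\gamma$ could vanish identically if $\gamma$ lies inside the degeneracy locus, which would break both your Schwarz-lemma Brody argument and your Noguchi-style Picard argument.

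What the paper does instead is a curve-by-curve reduction that sidesteps the pointwise question entirely. In the proof of Theorem \ref{thm_main_proof}, given $\gamma:\Delta^\ast\to S^o$ one first passes to the Zariski closure $B$ of ${\rm Im}(\gamma)$, desingularizes $B$ and the classifying map, and works on a smooth projective model where $\gamma$ has \emph{Zariski-dense image}. Only then is the Viehweg--Zuo Higgs sheaf (Theorem \ref{thm_big_Higgs_sheaf}) applied with $L$ ample on this model, a graded piece ${\rm Gr}^W_{m_0}$ is extracted, and a $\gamma$-dependent pseudo-metric $h_\gamma$ on $T_{\widetilde{S}}(-\log \pi^{-1}D)$ is produced (Section \ref{section_VZ_Finsler_metric}). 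Zariski density plus generic injectivity of $\rho_{m_0,1}$ guarantee that $\gamma^\ast h_\gamma$ is not identically zero, and the curvature inequality in Proposition \ref{prop_negative_curvature} (in the sense of currents, against an ample $\gamma^\ast\omega$) then feeds directly into the criterion of Deng--Lu--Sun--Zuo (Theorem \ref{thm_big_Picard_LSZ}) to produce the extension. Borel and Brody hyperbolicity are then \emph{formal} consequences of the Big Picard statement via Proposition \ref{prop_BPT_BH}; the paper does not derive Brody hyperbolicity independently from a Schwarz lemma. So you should replace the ``one global non-degenerate Finsler metric'' idea by the Zariski-closure reduction, and derive Brody and Borel as corollaries of Picard rather than as parallel outputs.
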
  
	A family of stable minimal models $(X^o, B^o), A^o \to S^o$ is said to be \emph{strictly birationally admissible} if $(X^o, A^o + B^o)$ admits a simultaneous simple normal crossing log birational model (see Definition \ref{defn_admissible}). More formally, the strictly birationally admissible condition ensures that the family does not contain degenerate fibers within the framework of birational geometry. This condition plays a crucial role in Theorem \ref{thm_main}, as demonstrated in \cite{stjc2025}.
	
	Given a functorial semi-log desingularization procedure (see, e.g., \cite{Bierstone2013} and \cite[\S 10.4]{Kollar2013}), one can associate with $\sM_{\rm slc}(d,\Phi_c,\Gamma,\sigma)$ a stratification such that over each stratum $\sS$, the universal family is strictly birationally admissible. Such a stratification is referred to as a \emph{birationally admissible stratification} (cf. Section \ref{section_stack}). A birationally admissible stratification always exists but is typically not unique, as it depends on the choice of the functorial desingularization functor. A typical example arises in the KSBA compactification of the moduli space of principally polarized manifolds. In this case, the open substack parameterizing smooth fibers constitutes a birationally admissible stratum, while the birationally admissible stratification on the boundary is determined by the semi-log desingularization process.    
	
	A direct consequence of Theorem \ref{thm_main} is the following result.  
	\begin{thm}\label{thm_main_moduli}  
		Let $\sS$ be a stratum of a birationally admissible stratification of $\sM_{\rm slc}(d, \Phi_c, \Gamma, \sigma)$. Then the following statements hold:  
		\begin{itemize}  
			\item \emph{(Schematic Picard hyperbolicity)} For every quasi-finite morphism $S\to\sS$ from a reduced quasi-projective scheme $S$ and every projective completion $\overline{S}$ of $S$, the pair $(\overline{S},\overline{S}\backslash S)$ is a Picard pair.
			\item \emph{(Schematic Borel hyperbolicity)} For every quasi-finite morphism $S\to\sS$ from a reduced quasi-projective scheme $S$, $S$ is Borel hyperbolic.
			\item \emph{(Schematic Brody hyperbolicity)} For every quasi-finite morphism $S\to\sS$ from a reduced quasi-projective scheme $S$, $S$ is Brody hyperbolic.
		\end{itemize}  
	\end{thm}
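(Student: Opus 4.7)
\medskip

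\noindent\textbf{Proof plan.} The plan is to reduce Theorem \ref{thm_main_moduli} directly to Theorem \ref{thm_main} by pulling back the universal family along a given quasi-finite morphism. Fix a birationally admissible stratification of $\sM_{\rm slc}(d,\Phi_c,\Gamma,\sigma)$, a stratum $\sS$ of it, and a quasi-finite morphism $\phi:S\to\sS$ from a reduced quasi-projective scheme $S$. The first step is to pull the universal family back along $\phi$ to obtain a family $f^o:(X^o,B^o),A^o \to S$ of $(d,\Phi_c,\Gamma,\sigma)$-stable minimal models; by the standard interpretation of morphisms to a moduli stack, specifying such an $f^o$ is the same data as specifying $\phi$.

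Next, I will verify the two hypotheses of Theorem \ref{thm_main} for $f^o$. By definition of a birationally admissible stratification, the universal family over $\sS$ admits a simultaneous SNC log birational model in the sense of Definition \ref{defn_admissible}; pulling this log birational model back along $\phi$ produces one for $f^o$, so $f^o$ is strictly birationally admissible. Moreover, the classifying map of $f^o$ is precisely the composition $S\xrightarrow{\phi}\sS\hookrightarrow\sM_{\rm slc}(d,\Phi_c,\Gamma,\sigma)$, which is quasi-finite because $\phi$ is quasi-finite and the inclusion of a stratum is a locally closed immersion of stacks, hence in particular a monomorphism.

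Once both hypotheses are in hand, the three conclusions of Theorem \ref{thm_main_moduli} follow directly from the three bullets of Theorem \ref{thm_main} applied to $f^o$. For any projective completion $\overline{S}$, the big Picard statement gives that $(\overline{S},\overline{S}\setminus S)$ is a Picard pair; Borel and Brody hyperbolicity of $S$ follow verbatim. If $S$ is reducible, I apply Theorem \ref{thm_main} to each irreducible component of $S^{\rm red}$ (which is still quasi-projective and carries the restricted family), and observe that each of the three hyperbolicity properties descends from the irreducible components to the whole $S$.

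The main obstacle in this plan is the verification that strict birational admissibility is preserved under the pullback $\phi:S\to\sS$. Concretely, one must check that the SNC log birational model of the universal family, which a priori lives over the Deligne--Mumford stack $\sS$, admits a well-defined base change along a morphism from a scheme, and that the transversality of the log boundary is preserved by this base change. This hinges on the compatibility of the pullback with the functorial semi-log desingularization that defined the stratification; once that compatibility is unwound the remainder of the deduction is purely formal.
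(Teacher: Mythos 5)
Your proposal is correct and follows essentially the same route as the paper, which treats this theorem as a direct consequence of Theorem \ref{thm_main}: one pulls back the universal family (strictly birationally admissible over the stratum by the very definition of a birationally admissible stratification) along the quasi-finite morphism $S\to\sS$ and applies Theorem \ref{thm_main} to the resulting family. The one ``obstacle'' you flag is in fact immediate and does not require revisiting the functorial desingularization: the conditions defining a simple normal crossing strictly log birational model (simple normal crossing over the base, fiberwise log birationality) are stable under arbitrary base change, so strict birational admissibility passes from the universal family over $\sS$ to its pullback to $S$ formally.
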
  
	For the moduli stack $\overline{\sM}_{g,n}$ of genus $g$ stable curves with $n$ marked points satisfying $2g - 2 + n > 0$, the boundary, defined as the simple normal crossing divisor $\partial\sM_{g,n} := \overline{\sM}_{g,n} \setminus \sM_{g,n}$, induces a canonical stratification on $\overline{\sM}_{g,n}$, with $\sM_{g,n}$ being its dense stratum. Each stratum in this stratification corresponds exactly to a distinct topological type of the fibers. This stratification is birationally admissible. The stack $\overline{\sM}_{g,n}$ admits a nice cover by a smooth projective variety that parameterizes stable curves equipped with a specific "level structure" (known as admissible $G$-covers; see \cite[Chapter XVI]{Griffiths2011}). Consequently, based on Theorem \ref{thm_main_moduli} and the existence of such a covering scheme, we derive the following hyperbolicity properties (instead of schematic hyperbolicity properties) for $\overline{\sM}_{g,n}$.
	\begin{cor}\label{cor_M_gn_stratum_hyperbolic}
		Let $\sS$ be a stratum of the canonical stratification of $\overline{\sM}_{g,n}$. Then the following statements hold:  
		\begin{itemize}  
			\item \emph{(Big Picard Theorem)} $(\overline{\sS}, \partial\sS)$ forms a Picard pair, meaning that for any holomorphic map $\gamma: \Delta^\ast \to \sS^{\rm an}$, either $0 \in \overline{\gamma^{-1}(\partial\sS)}$, or $\gamma \circ \iota_n$ can be extended to a holomorphic map $\overline{\gamma \circ \iota_n}: \Delta \to \overline{\sS}^{\rm an}$ for some $n$, where $\iota_n: \Delta^\ast \to \Delta^\ast$ is defined by $\iota_n(z) = z^n$.  
			\item \emph{(Borel Hyperbolicity)} Any holomorphic map from a finite type reduced algebraic stack to $\sS$ is algebraic.  
			\item \emph{(Brody Hyperbolicity)} For any holomorphic map $\bC \to \sS^{\rm an}$, the composition map $\bC\to S$ is constant, where $S$ is the coarse moduli space of $\sS$.
		\end{itemize}
	\end{cor}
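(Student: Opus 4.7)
The plan is to reduce each of the three statements to the schematic hyperbolicity of Theorem \ref{thm_main_moduli}, applied to a suitable finite covering scheme. Let $\pi:V\to\overline{\sM}_{g,n}$ denote the finite cover by the smooth projective variety of admissible $G$-covers recalled above, and set $V_\sS:=\pi^{-1}(\sS)$ and $\overline{V_\sS}:=\pi^{-1}(\overline\sS)$, so that $V_\sS$ is quasi-projective, $\overline{V_\sS}$ is a projective completion of $V_\sS$, and the restriction $\pi_\sS:V_\sS\to\sS$ is finite and representable. Because the topological type of the fibres of the universal stable curve is constant along each stratum of the canonical stratification, $\pi_\sS$ is moreover \'etale. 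Theorem \ref{thm_main_moduli} applied to the quasi-finite morphism $V_\sS\to\sS$ then yields at once that $(\overline{V_\sS},\overline{V_\sS}\setminus V_\sS)$ is a Picard pair and that $V_\sS$ is Borel and Brody hyperbolic as a scheme.

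For the Big Picard statement, given $\gamma:\Delta^\ast\to\sS^{\rm an}$, I would form the analytic fibre product $\Delta^\ast\times_{\sS^{\rm an}}V_\sS^{\rm an}$; by \'etaleness of $\pi_\sS$ this is a finite \'etale cover of $\Delta^\ast$, and any connected component is isomorphic to $\Delta^\ast$ through a power map $\iota_n$ for some $n\geq 1$, producing a holomorphic lift $\tilde\gamma:\Delta^\ast\to V_\sS^{\rm an}$ with $\pi_\sS\circ\tilde\gamma=\gamma\circ\iota_n$. The Picard property of $V_\sS$ extends $\tilde\gamma$ to $\Delta\to\overline{V_\sS}^{\rm an}$, and composing with $\pi_\sS$ yields the extension $\overline{\gamma\circ\iota_n}:\Delta\to\overline\sS^{\rm an}$. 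The Brody statement is analogous: for $\phi:\bC\to\sS^{\rm an}$, the simply-connectedness of $\bC$ trivialises the finite \'etale pullback cover and produces a holomorphic lift $\bC\to V_\sS^{\rm an}$, which Brody hyperbolicity of $V_\sS$ forces to be constant; composing with the coarse-moduli map then shows that the induced map to $S$ is constant.

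For Borel hyperbolicity, given a holomorphic $f:T\to\sS$ from a finite type reduced algebraic stack $T$, I would pick a smooth atlas $T_0\to T$ by a scheme and form $T_0':=T_0^{\rm an}\times_{\sS^{\rm an}}V_\sS^{\rm an}$. The \'etaleness of $\pi_\sS$ makes $T_0'\to T_0^{\rm an}$ a finite \'etale analytic cover of the algebraic scheme $T_0$, and by Riemann existence every such cover is the analytification of a unique finite \'etale $T_0$-scheme, so $T_0'$ acquires a natural structure of a finite type reduced scheme. Schematic Borel hyperbolicity of $V_\sS$ then renders the induced holomorphic map $T_0'\to V_\sS^{\rm an}$ algebraic, hence so is the composite $T_0'\to\sS$. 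Faithfully flat descent along the finite \'etale morphism $T_0'\to T_0$ followed by fppf descent along $T_0\to T$ produces the desired algebraic morphism $T\to\sS$ agreeing with $f$.

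The principal technical point, and the main obstacle, is the verification that $\pi_\sS:V_\sS\to\sS$ is \'etale on each stratum, despite $\pi$ being ramified across the boundary strata of $\overline{\sM}_{g,n}$. This should follow from the explicit construction of admissible $G$-covers: along a single stratum the dual graph of the nodal curve with its marked points, together with the combinatorial $G$-action, is constant, so the admissible cover is locally trivialisable over the versal deformation of the stratum, forcing the restriction of $\pi$ to that stratum to be \'etale. Granting this structural fact, the lifting and descent steps above are standard finite \'etale manipulations, and the remainder of the proof reduces to routine compatibility checks.
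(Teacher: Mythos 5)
Your proposal follows essentially the same route as the paper: the paper proves the corollary by combining the schematic hyperbolicity of the stratum (Theorem \ref{thm_main_moduli}) with the general lifting statements for uniformisable stacks (Propositions \ref{prop_schematic_Picard}, \ref{prop_schematic_Borel}, \ref{prop_schematic_Brody}) and with Proposition \ref{prop_moduli_curve_uniformisable}, which supplies exactly the admissible $G$-cover moduli space you invoke. The one point to adjust is that you must take the \emph{reduced} preimage $\overline{V_\sS}:=\pi^{-1}(\overline{\sS})_{\rm red}$ (the scheme-theoretic preimage is non-reduced along boundary strata, since locally $\eta^\ast(w_i)=z_i^k$), and it is precisely this local normal form of the cover -- ramification purely transverse to the stratum -- rather than constancy of the topological type alone, that the paper uses to verify \'etaleness of $\pi_\sS$ over $\sS$.
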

    Motivated by the proof of Corollary \ref{cor_M_gn_stratum_hyperbolic} and a conjecture made by Javanpeykar-Sun-Zuo \cite[Conjecture 1.6]{JSZ2024}, we make the following conjecture.
    \begin{conj}\label{conj_log_uniformisable}
    	Let $\sS$ be a stratum of a birationally admissible stratification of $\sM_{\rm slc}(d, \Phi_c, \Gamma, \sigma)$. Then the pair $(\overline{\sS}, \overline{\sS} \setminus \sS)$ is logarithmically uniformisable, in the sense that there exists a surjective morphism $\eta \colon X \to \overline{\sS}$, where $X$ is a reduced projective scheme, and $\eta$ restricts to a finite \'etale morphism over $\sS$.
    \end{conj}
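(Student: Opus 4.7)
The plan is to imitate the admissible $G$-cover construction that underlies Corollary \ref{cor_M_gn_stratum_hyperbolic}: rigidify the (finite) automorphism groups of stable minimal models by a level structure to produce a finite \'etale cover of $\sS$, and then take the normalization of $\overline{\sS}$ in the function field of this cover.

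First, because $\sS$ is a stratum of a birationally admissible stratification, the universal family over $\sS$ admits a simultaneous SNC log birational model $(Y,D)\to\sS$. Fix an integer $N$ coprime to the orders of the generic automorphism groups, and consider a natural $\bZ/N$-local system on $\sS$ arising from the log smooth family $(Y,D)\to\sS$ — for instance a suitable direct summand of $R^i\pi_\ast(\bZ/N)$ for $\pi\colon Y\setminus D\to\sS$. Trivializing this local system should define a finite \'etale cover $\sS^{[N]}\to\sS$ whose objects carry enough level data to kill all automorphisms for $N$ large. For such $N$, $\sS^{[N]}$ is an algebraic space and, after a further finite \'etale cover, a quasi-projective scheme (this is the higher-dimensional analogue of a level-$N$ structure on the Jacobian of a stable curve).

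Next, define $X$ to be the normalization of $\overline{\sS}$ in the function field of $\sS^{[N]}$, equipped with the induced map $\eta\colon X\to\overline{\sS}$. By construction $\eta$ restricts to $\sS^{[N]}\to\sS$ over $\sS$, which is finite \'etale. To conclude that $X$ is a projective scheme one must show that $\eta$ is itself finite, which reduces to verifying that the monodromy of the local system at each generic point of $\overline{\sS}\setminus\sS$ has finite order (equivalently, that $\sS^{[N]}\to\sS$ has tame, finite monodromy at the boundary). When this holds, projectivity of $X$ follows from projectivity of $\overline{\sS}$ (guaranteed by Birkar's construction in \cite{Birkar2022} together with the projective coarse moduli of each stratum closure).

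The main obstacle is the last step: showing that the level-$N$ cover extends finitely, not merely as a ramified cover with possibly infinite monodromy, across each component of $\overline{\sS}\setminus\sS$. For curves this is precisely the Harris-Mumford admissible cover theorem, which is proved by classifying all possible degenerations of a level structure at a node; in higher dimensions no analogous theory exists for general semi-log canonical degenerations, and the monodromy of $R^i\pi_\ast(\bZ/N)$ along a boundary divisor of $\overline{\sS}$ can in principle be unipotent of infinite order, which would prevent the cover from extending to a finite morphism. A birationally admissible stratification is tailored precisely so that the log birational model remains SNC simultaneously over each stratum, and it is plausible that one can bootstrap this to tameness of the monodromy at the stratum boundary by semi-stable reduction on a resolution of $\overline{\sS}$, but making this precise and functorial in the KSBA setting is the heart of the conjecture and, I expect, beyond the reach of the techniques developed in the present paper.
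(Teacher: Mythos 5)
This statement is stated in the paper as Conjecture \ref{conj_log_uniformisable}; the paper offers no proof of it, only motivation. The only case the paper actually establishes is the curve case, Proposition \ref{prop_moduli_curve_uniformisable}, where the cover is the pre-existing fine moduli space of admissible $G$-covers of Harris--Mumford/Abramovich--Corti--Vistoli, together with the explicit local description of $\eta$ near the boundary showing that $\eta^{-1}(\overline{\sS})_{\rm red}\to\overline{\sS}$ is finite and \'etale over $\sS$. So there is no proof in the paper against which to compare yours, and your proposal cannot be accepted as a proof of the statement.

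Your own write-up makes the decisive admission: the step where a level-$N$ cover of $\sS$ (obtained by trivializing a torsion local system attached to the SNC birational model of the universal family) is shown to extend to a \emph{finite} morphism over $\overline{\sS}$ is precisely the content of the conjecture, and you correctly note that the monodromy of $R^i\pi_\ast(\bZ/N)$ along a boundary component of $\overline{\sS}$ has no reason a priori to be of finite order in the relevant sense, and that no higher-dimensional analogue of the admissible-cover compactification is available for general slc degenerations. There are additional unverified steps earlier in your argument as well: that trivializing such a local system actually rigidifies the objects (kills all automorphisms) for large $N$ is known for abelian varieties and curves via their Jacobians, but is not established for general stable minimal models, whose automorphisms need not act faithfully on any fixed cohomological invariant of a birational model. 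In short, your proposal is a reasonable heuristic paralleling the mechanism behind Proposition \ref{prop_moduli_curve_uniformisable}, and your diagnosis of the obstruction is accurate, but the statement remains a conjecture and your argument does not close it.
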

    The projective variety $X$ in Conjecture \ref{conj_log_uniformisable} is expected to be a fine moduli space, whose objects are those in $\overline{\sS}$ equipped with a certain "level structure". As previously noted, for every stratum $\sS$ of $\overline{\sM}_{g,n}$, the pair $(\overline{\sS}, \overline{\sS} \setminus \sS)$ is logarithmically uniformisable, where the cover is given by the fine moduli space of certain admissible $G$-covers (see Proposition \ref{prop_moduli_curve_uniformisable}). Let $\sS$ be a stratum of a birationally admissible stratification of $\sM_{\rm slc}(d, \Phi_c, \Gamma, \sigma)$. It can be shown that if the pair $(\overline{\sS}, \overline{\sS} \setminus \sS)$ is logarithmically uniformisable, then $(\overline{\sS}, \partial \sS)$ forms a Picard pair, and $\sS$ is both Borel hyperbolic and Brody hyperbolic in the stacky sense as in Corollary \ref{cor_M_gn_stratum_hyperbolic} (see also Corollary \ref{cor_log_uniform_hyperbolic}).
	\subsection{Historical accounts}
	The \emph{big Picard theorem} originates from the classical big Picard theorem, which asserts that any holomorphic map from the punctured disk $\Delta^\ast$ into $\bP^1$ omitting three distinct points can be extended to a holomorphic map $\Delta \to \bP^1$. In a recent work by Deng-Lu-Sun-Zuo \cite{DLSZ}, it was established that the big Picard theorem applies to the base of a maximal variational family of good minimal manifolds.  
	
	The \emph{Borel hyperbolicity} is a direct formal consequence of the big Picard theorem (see \cite[Theorem 3.13]{Javanpeykar2020}). 
	
	The \emph{Brody hyperbolicity}, in the context of families of canonically polarized manifolds, was established by Viehweg-Zuo \cite{VZ2003} through their construction of systems of Higgs sheaves. This result was subsequently extended by Popa-Taji-Wu \cite{PTW2019} to families of general type minimal manifolds, utilizing constructions similar to those in \cite{PS17}. These findings were further generalized by Deng \cite{Deng2022} to encompass families of good minimal manifolds. Additionally, Wei-Wu \cite{WeiWu2023} investigated the Brody hyperbolicity of log smooth families of pairs of log general type. For a more comprehensive historical overview of the hyperbolicity problem for moduli of varieties, readers are referred to \cite{DLSZ}.
	
	The study of hyperbolicity in moduli stacks is a relatively recent field. For a exploration of arithmetic hyperbolicity and Borel hyperbolicity in certain examples of moduli stacks, readers are referred to the work of Javanpeykar and Loughran \cite{Javanpeykar2021}.
    \subsection{Acknowledgement}
	The author would like to express sincere gratitude to A. Javanpeykar for his valuable comments. He kindly pointed out several errors concerning the properties of Borel hyperbolicity and Brody hyperbolicity on DM-stacks and generously shared his insights on this topic.
	\section{Hodge theoretic big Picard theorem}
	In this section, we extend the work of Deng-Lu-Sun-Zuo \cite{DLSZ} to the context of variations of mixed Hodge structures and propose a systematic framework for constructing hyperbolic spaces, contingent upon the existence of a particular admissible variation of mixed Hodge structure. These results enable us to establish hyperbolicity properties for the base spaces of families of singular varieties. The principal result of this section is presented in Theorem \ref{thm_Hodge_Picard}.
	\subsection{Variation of mixed Hodge structure}
	For the purposes of this paper, we focus on the differential-geometric properties. Consequently, in this section, we consider graded $\bR$-polarized variations of mixed Hodge structure and disregard their underlying $\bZ$-structure, even if such a structure exists.
	
	Let $S$ be a complex manifold. Denote by $\sA^k_S$ the sheaf of $C^\infty$ $k$-forms on $S$, and by $\sA^{p,q}_S$ the sheaf of $C^\infty$ $(p,q)$-forms on $S$.
	\begin{defn}  
		A \emph{pre-variation of Hodge structures} $\bV = (\bV_{\bR}, V, \nabla, F^\bullet)$ of weight $m$ on $S$ consists of the following data:  
		\begin{itemize}  
			\item An $\bR$-local system $\bV_{\bR}$ on $S$, together with a holomorphic flat connection $(V, \nabla)$ corresponding to $\bV_{\bC} := \bV_{\bR} \otimes_{\bR} \bC$ via the Riemann-Hilbert correspondence.  
			\item A regular, decreasing filtration $F^\bullet$ of holomorphic subbundles of $V$, such that each graded quotient $F^p / F^{p+1}$ is locally free, and for every point $s \in S$, the fiber $\bV(s) := (\bV_{\bR}(s), F^\bullet(s))$ forms a pure $\bR$-Hodge structure of weight $m$.  
		\end{itemize}  		
		An \emph{$\bR$-polarization} on $\bV$ is a bilinear map between $\bR$-local systems  
		$$ Q : \bV_{\bR} \otimes \bV_{\bR} \to \bR_S(-m), $$  
		which induces an $\bR$-polarization on $\bV(s)$ for every point $s \in S$.  
		
		A \emph{variation of Hodge structures} of weight $m$ is a pre-variation of Hodge structures $\bV = (\bV_{\bR}, V, \nabla, F^\bullet)$ of weight $m$ satisfying the Griffiths transversality condition:  
		$$ \nabla(F^p) \subset F^{p-1} \otimes \Omega_S, \quad \forall p. $$  
	\end{defn}
	\begin{defn}  
		A \emph{pre-variation of mixed Hodge structures} $\bV = (\bV_{\bR}, V, \nabla, W_\bullet, F^\bullet)$ on $S$ consists of the following data:  
		\begin{itemize}  
			\item An $\bR$-local system $\bV_{\bR}$ on $S$, together with a finite increasing filtration of local subsystems $W_\bullet$ of $\bV_{\bR}$.  
			\item A holomorphic flat connection $(V, \nabla)$ on $S$, along with a finite increasing filtration of sub flat connections $W_{\bullet,\bR}$, such that $(\bV_{\bR}, W_{\bullet,\bR}) \otimes_{\bR} \bC$ corresponds to $(V, \nabla, W_\bullet)$ via the Riemann-Hilbert correspondence.  
			\item A finite decreasing filtration $F^\bullet$ of holomorphic subbundles of $V$,  
		\end{itemize}  
		such that  
		$$
		{\rm Gr}^W_m(\bV) := ({\rm Gr}^W_m(\bV_{\bR}), {\rm Gr}^W_m(V), {\rm Gr}^W_m(\nabla), F^\bullet {\rm Gr}^W_m)
		$$  
		forms a pre-variation of Hodge structures of weight $m$.  
		
		A \emph{variation of mixed Hodge structures} on $S$ is a pre-variation of mixed Hodge structures $\bV = (\bV_{\bR}, V, \nabla, W_\bullet, F^\bullet)$ on $S$ satisfying the Griffiths transversality condition:  
		$$
		\nabla(F^p) \subset F^{p-1} \otimes \Omega_S, \quad \forall p.
		$$  
		
		A \emph{graded $\bR$-polarization} of a variation of mixed Hodge structures $\bV = (\bV_{\bR}, V, \nabla, W_\bullet, F^\bullet)$ consists of an $\bR$-polarization on each ${\rm Gr}^W_m(\bV)$. A variation of mixed Hodge structures is said to be \emph{graded $\bR$-polarizable} if it admits a graded $\bR$-polarization.  
	\end{defn}  
	\subsection{Admissible variation of mixed Hodge structure}
	Let $\overline{S}$ be a complex manifold, and let $E \subset \overline{S}$ be a simple normal crossing divisor. Define $S := \overline{S} \setminus E$. 
	
	Recall that for a flat connection $(V, \nabla)$ on $S$, the lower canonical extension of $(V, \nabla)$ to $\overline{S}$ is a logarithmic connection  
	\[
	\nabla: \widetilde{V} \to \widetilde{V} \otimes_{\sO_{\overline{S}}} \Omega_{\overline{S}}(\log E),
	\]  
	which extends $(V, \nabla)$ such that the real parts of the eigenvalues of the residue map along each component of $E$ lie in the interval $[0, 1)$. According to \cite[Proposition 5.4]{Deligne1970}, the lower canonical extension always exists and is unique up to isomorphism. Let $\widetilde{W}_m$ denote the lower canonical extension of $W_m$. For the notion of admissibility, we follow Kashiwara \cite{Kashiwara1986}. 
	\begin{defn}  
		Let $\bV = (\bV_{\bR}, V, \nabla, W_\bullet, F^\bullet)$ be a graded $\bR$-polarized variation of mixed Hodge structures on the punctured unit disc $\Delta^\ast$. The variation $\bV$ is called \emph{pre-admissible} with respect to $\Delta$ if the following conditions are satisfied:  
		\begin{itemize}  
			\item The monodromy operator of $\bV_{\bR}$ at the origin is quasi-unipotent.  
			\item The logarithm $N$ of the unipotent part of the residue map ${\rm Res}_0(\nabla)$ admits a weight filtration relative to $\widetilde{W}_\bullet(0)$.  
			\item The filtration $F^\bullet$ extends to a filtration $\widetilde{F}^\bullet$ of subbundles of $\widetilde{V}$ such that ${\rm Gr}_{\widetilde{F}}^p {\rm Gr}^{\widetilde{W}}_m \widetilde{V}$ is locally free for all $p$ and $m$.  
		\end{itemize}  
	\end{defn}
	\begin{defn}  
		Let $\bV = (\bV_{\bR}, V, \nabla, W_\bullet, F^\bullet)$ be a graded $\bR$-polarized variation of mixed Hodge structures on $S$. The variation $\bV$ is called \emph{admissible} with respect to $\overline{S}$ if, for every holomorphic map $f: \Delta \to \overline{S}$ such that $f(\Delta^\ast) \subset S$, the pullback $(f|_{\Delta^\ast})^\ast(\bV)$ is pre-admissible with respect to $\Delta$.  
	\end{defn}
	\begin{prop} \emph{(Kashiwara \cite[Proposition 1.11.3]{Kashiwara1986})} \label{prop_Kashiwara_extendHodge}  
		Let $\bV = (\bV_{\bR}, V, \nabla, W_\bullet, F^\bullet)$ be a graded $\bR$-polarized variation of mixed Hodge structures on $S$, which is admissible with respect to $\overline{S}$. Then the Hodge filtration $\{F^p\}$ admits an extension $\{\widetilde{F}^p\}$ on $\overline{S}$ such that the following conditions are satisfied:  
		\begin{enumerate}  
			\item $\widetilde{F}^p$ is a subbundle of $\widetilde{V}$ for all $p$.  
			\item ${\rm Gr}_{\widetilde{F}}^p {\rm Gr}_m^{\widetilde{W}}(\widetilde{V})$ is locally free for all $p$ and $m$.  
			\item $\widetilde{F}^\bullet$ satisfies the Griffiths transversality condition:  
			$$
			\nabla(\widetilde{F}^p) \subset \widetilde{F}^{p-1} \otimes \Omega_{\overline{S}}(\log E), \quad \forall p.
			$$  
		\end{enumerate}  
	\end{prop}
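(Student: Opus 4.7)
The statement is the classical theorem of Kashiwara, and any proof must ultimately use his machinery for admissible variations; however the plan below sketches how I would organize the argument from the definitions already given in the excerpt.

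The plan is to reduce to a local statement near a boundary point, then apply the nilpotent orbit theorem in the admissible mixed case. First I would fix a point $p\in E$ and choose local coordinates $(z_1,\dots,z_n)$ on $\overline{S}$ so that $E=\{z_1\cdots z_k=0\}$. Testing admissibility on the holomorphic disks $t\mapsto (0,\dots,t,\dots,0,z_{k+1}^0,\dots,z_n^0)$ gives quasi-unipotent monodromy around each branch; let $N_i$ be the logarithm of the unipotent part of the $i$-th monodromy, acting on the fiber of the lower canonical extension $\widetilde V$ at $p$. Applying the admissibility hypothesis to the disks $t\mapsto (t^{a_1},\dots,t^{a_k},z_{k+1}^0,\dots,z_n^0)$ and $t\mapsto (t,\dots,t,z_{k+1}^0+ts_{k+1},\dots)$ and invoking Kashiwara's compatibility statement for relative weight filtrations, one sees that every $N=\sum a_iN_i$ with $a_i>0$ admits a relative weight filtration with respect to $\widetilde W_\bullet(p)$ and that this filtration is independent of the positive weights $a_i$.

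The key step is the \emph{mixed nilpotent orbit theorem}: pulling back to the universal cover $\mathbb{H}^k\times\Delta^{n-k}$ and trivializing the bundle by the multivalued flat frame, one shows that the Hodge filtration $F^\bullet$ is asymptotically approximated by its nilpotent orbit $\exp\bigl(\sum_i z_iN_i\bigr)F^\bullet_\infty$ for a limit filtration $F^\bullet_\infty$ of the fiber of $\widetilde V$ at $p$. After multiplying by $\exp\bigl(-\tfrac{1}{2\pi i}\sum_i(\log z_i)N_i\bigr)$, which is precisely the twist that produces the lower canonical extension, this asymptotic statement descends to a neighborhood of $p$ in $\overline{S}$ and yields holomorphic subsheaves $\widetilde F^p$ of $\widetilde V$ extending $F^p$ on $S$. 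Uniqueness of the extension guarantees that the local pieces glue to a global extension on $\overline{S}$.

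It then remains to verify the three claimed properties. Local freeness of $\widetilde F^p$ follows from the one-variable case at each boundary point together with Hartogs' extension in higher codimension. Local freeness of ${\rm Gr}_{\widetilde F}^p{\rm Gr}^{\widetilde W}_m(\widetilde V)$ is exactly the surviving piece of the pre-admissibility hypothesis, combined with the strictness of the limit MHS on the fiber at $p$. Griffiths transversality is the easiest part: on the open $S$ we already have $\nabla(F^p)\subset F^{p-1}\otimes\Omega_S$; since the extended connection has logarithmic poles along $E$ and $\widetilde F^p$, $\widetilde F^{p-1}$ are the unique coherent extensions of $F^p$, $F^{p-1}$ matching the nilpotent orbit, the inclusion persists upon replacing $\Omega_S$ by $\Omega_{\overline S}(\log E)$. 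The main obstacle is clearly the mixed nilpotent orbit theorem itself: in the pure case it is Schmid's $\mathrm{SL}_2$-orbit theorem, but in the admissible mixed setting the interaction of the weight filtration with the commuting nilpotents $N_i$ requires Kashiwara's full machinery, and this is the only point at which the proof is genuinely non-formal.
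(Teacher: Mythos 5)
The paper does not prove this proposition; it is cited to Kashiwara's [Proposition 1.11.3] and used as a black box, so there is no internal proof to compare your sketch against. Your outline is a fair, if compressed, reconstruction of the argument underlying the citation: reduce to a boundary point, use admissibility on test curves to obtain commuting nilpotents $N_i$ whose relative weight filtration is constant on the positive cone, invoke a mixed nilpotent orbit theorem to control the Hodge filtration asymptotically, twist by $\exp\bigl(-\tfrac{1}{2\pi i}\sum_i (\log z_i)N_i\bigr)$ to pass to the lower canonical extension, and glue.

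Two points deserve more care than you give them. First, Griffiths transversality of $\widetilde F^\bullet$ is not a near-automatic consequence of having the right coherent extension: the extended connection acquires a logarithmic pole, and the inclusion $\nabla(\widetilde F^p)\subset \widetilde F^{p-1}\otimes\Omega_{\overline S}(\log E)$ has to be extracted from the nilpotent orbit approximation, exactly as in Schmid's treatment of the pure case; it is not formal. Second, condition (1) requires $\widetilde F^p$ to be a \emph{subbundle} of $\widetilde V$, i.e.\ the quotient $\widetilde V/\widetilde F^p$ must also be locally free. A Hartogs-type extension across codimension two gives you a reflexive (hence locally free in low dimension) extension, but not the direct-summand property; that, together with the graded local freeness in (2), is precisely what the orbit theorem and the pre-admissibility hypothesis have to deliver, and it is the technically delicate part of Kashiwara's proof rather than a routine coda.
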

	\subsection{The associated logarithmic Higgs bundle}\label{section_lHB_BM}
	Let $\overline{S}$ be a complex manifold, and let $E \subset \overline{S}$ be a simple normal crossing divisor. Define $S := \overline{S} \setminus E$. Let $\bV = (\bV_{\bR}, V, \nabla, W_\bullet, F^\bullet)$ be a variation of mixed Hodge structures on $S$, which is admissible with respect to $\overline{S}$.	
	Let $(\widetilde{V}, \nabla)$ (resp. $(\widetilde{W}_\bullet, \nabla)$) denote the lower canonical extension of $(V, \nabla)$ (resp. $(W_\bullet, \nabla)$). Let $\widetilde{F}^\bullet$ be the extension of $F^\bullet$ guaranteed by Proposition \ref{prop_Kashiwara_extendHodge}, which satisfies Griffiths transversality:  
	\begin{align}\label{align_Griff_tran_Kashiwara_extension}  
		\nabla(\widetilde{F}^p) \subset \widetilde{F}^{p-1} \otimes_{\sO_{\overline{S}}} \Omega_{\overline{S}}(\log E), \quad \forall p.
	\end{align}  
	Define  
	$$
	\widetilde{H}^{p} := {\rm Gr}_{\widetilde{F}}^p(\widetilde{V}) = \widetilde{F}^p/\widetilde{F}^{p+1}, \quad \forall p.
	$$  
	Then (\ref{align_Griff_tran_Kashiwara_extension}) implies that $\nabla$ induces a $\sO_{\overline{S}}$-linear morphism  
	\begin{align}\label{align_logtheta_BM}  
		\theta: \widetilde{H}^{p} \to \widetilde{H}^{p-1} \otimes_{\sO_{\overline{S}}} \Omega_{\overline{S}}(\log E), \quad \forall p.
	\end{align}  
	Set $\widetilde{H} := \bigoplus_{p} \widetilde{H}^{p}$, and let  
	$$
	\theta: \widetilde{H} \to \widetilde{H} \otimes_{\sO_{\overline{S}}} \Omega_{\overline{S}}(\log E)
	$$  
	denote the induced map. The flatness of $\nabla$ (i.e., $\nabla^2 = 0$) ensures that $\theta^2 = 0$. Consequently, $(\widetilde{H}, \theta)$ defines a logarithmic Higgs bundle on $(\overline{S}, E)$.
	\begin{defn}\label{defn_LHB_ass_VHSBM}  
		We refer to $(\widetilde{H}, \theta)$ as the \emph{lower canonical system of logarithmic Higgs bundles} associated with the admissible variation of mixed Hodge structure $\bV$.  
	\end{defn}
	The logarithmic Higgs bundle $(\widetilde{H}, \theta)$ is equipped with a weight filtration of Higgs subbundles. For each $k$ and $p$, let  
	$$  
	\{W_m(\widetilde{H}^p) = \widetilde{W}_m({\rm Gr}_{\widetilde{F}}^p(\widetilde{V}))\}_{m \in \bZ}  
	$$  
	denote the induced weight filtration, and define  
	$$  
	W_m(\widetilde{H}) := \bigoplus_{p\leq w} W_m(\widetilde{H}^p).  
	$$  
	Then $(W_m(\widetilde{H}), \theta)$ forms a logarithmic Higgs subbundle, i.e.,  
	$$  
	\theta(W_m(\widetilde{H})) \subset W_m(\widetilde{H}) \otimes_{\sO_{\overline{S}}} \Omega_{\overline{S}}(\log E).  
	$$  
	This construction yields the sub-quotient logarithmic Higgs bundles  
	\begin{align}  
		\theta: {\rm Gr}^W_m(\widetilde{H}) \to {\rm Gr}^W_m(\widetilde{H}) \otimes_{\sO_{\overline{S}}} \Omega_{\overline{S}}(\log E).  
	\end{align}  
	These define the lower canonical system of logarithmic Higgs bundles associated with the variation of pure Hodge structure ${\rm Gr}^W_m(\bV)$. 
	\subsection{Hodge theoretic big Picard theorem}\label{section_VZ_Finsler_metric}
	Let $S$ be a smooth projective variety, and let $D \subset E \subset S$ denote two closed algebraic subsets. Let $\bV = (\bV_{\bR}, V, \nabla, W_\bullet, F^\bullet)$ represent a graded $\bR$-polarized variation of mixed Hodge structures on $S \setminus E$, which is admissible with respect to $S$. Define $w = \max\{p \mid F^p \neq 0\}$ as the maximal index associated with the Hodge filtration.
	\begin{defn}\label{defn_VMHS_maxvar}
		We say that $\bV = (\bV_{\bR}, V, \nabla, W_\bullet, F^\bullet)$ has \emph{maximal variation with respect to $(S, D)$} if the following conditions are satisfied:  
		\begin{itemize}  
			\item There exists a closed algebraic subset $Z \subset S$ such that $\mathrm{codim}_S(Z) \geq 2$, and both $D \setminus Z$ and $E \setminus Z$ are simple normal crossing divisors on $S \setminus Z$,  
			\item The lower canonical system of logarithmic Higgs bundles $(\widetilde{H} = \bigoplus_{p\leq w}\widetilde{H}^p, \theta)$ is defined on $(S \setminus Z, E \setminus Z)$ and associated with $\bV$,  
			\item An ample line bundle $L$ on $S$ and an injective morphism $L|_{S \setminus Z} \to \widetilde{H}^w$ exist,  
		\end{itemize}  
		such that the Higgs subsheaf  
		$$  
		\bigoplus_{p\geq0}L^p \subset \bigoplus_{p\geq0} \widetilde{H}^{w-p}, \quad L^p \subset \widetilde{H}^{w-p},  
		$$  
		generated by $L^0 = L|_{S \setminus Z}$, has logarithmic poles along $D$, i.e.,  
		$$  
		\theta(L^p) \subset L^{p+1} \otimes \Omega_{S \setminus Z}(\log (D \setminus Z)), \quad \forall p \geq 0.  
		$$
	\end{defn}
	The main result of this section is the following theorem.  
	\begin{thm}\label{thm_Hodge_Picard}  
		Let $S$ be a smooth projective variety, and let $D \subset E \subset S$ denote two closed algebraic subsets. Assume that there exists a graded $\bR$-polarized variation of mixed Hodge structures on $S \setminus E$, which is admissible with respect to $S$ and has maximal variation with respect to $(S, D)$. Let $\gamma: \Delta^\ast \to S \setminus D$ be a holomorphic map with Zariski-dense image. Then $\gamma$ extends to a holomorphic map $\overline{\gamma}: \Delta \to X$.  
	\end{thm}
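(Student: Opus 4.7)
The argument follows the Viehweg--Zuo Higgs-bundle strategy in the form established by Deng--Lu--Sun--Zuo \cite{DLSZ}, adapted from the pure Hodge setting to the admissible mixed Hodge setting, with a Noguchi-type extension criterion invoked at the end. The plan is to construct a singular Finsler pseudometric on the logarithmic tangent bundle $T_{S\setminus Z}(-\log(D\setminus Z))$ out of the Higgs subsystem $\bigoplus L^{p}$, show that after pullback by $\gamma$ it is dominated by the Poincar\'e metric of $\Delta^{\ast}$, and extract the extension $\overline{\gamma}$. Concretely, fix a smooth Hermitian metric $h_L$ on $L$ with strictly positive curvature. The assumption $\theta(L^{p})\subset L^{p+1}\otimes\Omega_{S\setminus Z}(\log(D\setminus Z))$, together with $\theta\wedge\theta=0$, gives iterated $\sO$-linear maps
\[
\theta^{k}\colon L\longrightarrow L^{k}\otimes \mathrm{Sym}^{k}\Omega_{S\setminus Z}(\log(D\setminus Z))\hookrightarrow \widetilde{H}^{w-k}\otimes \mathrm{Sym}^{k}\Omega_{S\setminus Z}(\log(D\setminus Z))
\]
for $1\leq k\leq w$. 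Combining $h_{L}^{-1}$ with the Hodge metric on $\widetilde{H}^{w-k}$ (pulled back from the graded piece $\mathrm{Gr}^{W}_{\bullet}\widetilde{H}^{w-k}$ via the splitting afforded by admissibility) yields continuous semi-positive pseudo-norms $F_{k}$ on $\mathrm{Sym}^{k}T_{S\setminus Z}(-\log(D\setminus Z))$. Setting $F(v):=\max_{1\leq k\leq w}F_{k}(v^{\otimes k})^{1/k}$ defines the desired Finsler pseudometric on $T_{S\setminus Z}(-\log(D\setminus Z))$.

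Non-degeneracy of $\gamma^{\ast}F$ follows from the injectivity of $L\hookrightarrow\widetilde{H}^{w}$, the fact that $\bigoplus L^{p}$ is generated by $L$ under iterated $\theta$, and the Zariski-density of $\gamma(\Delta^{\ast})$: for some $k$ the map $\theta^{k}$ is generically injective along $\gamma$, so $\gamma^{\ast}F\not\equiv 0$. The central analytic step is to establish the current inequality
\[
\sqrt{-1}\,\partial\overline{\partial}\log\gamma^{\ast}F \;\geq\; c\,\gamma^{\ast}F
\]
on $\{\gamma^{\ast}F\neq 0\}\subset\Delta^{\ast}$, for some $c>0$. This is a Griffiths-type curvature computation: the iterated Higgs field along the tangent direction of $\gamma$ produces the positive term $c\,\gamma^{\ast}F$; the Hodge metric on each graded piece $\mathrm{Gr}^{W}_{m}\widetilde{H}^{w-k}$ contributes a Griffiths semi-negative term; and the positivity of $\omega_{L}$ absorbs the contribution of $h_{L}^{-1}$. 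A generalized Ahlfors--Schwarz lemma then yields a bound $\gamma^{\ast}F \leq C_{0}\,\omega_{P}$, where $\omega_{P}$ is the complete Poincar\'e metric on $\Delta^{\ast}$.

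Since $F$ is defined on the logarithmic tangent bundle and controls tangent directions transverse to $D$ by construction, the pointwise domination $\gamma^{\ast}F\leq C_{0}\omega_{P}$, combined with the maximal-variation hypothesis along $D$, rules out accumulation of $\gamma$ at $D$ without the pullback metric blowing up faster than $\omega_{P}$. Following the Nevanlinna--Noguchi big Picard argument from \cite{DLSZ}, this forces $\gamma(\Delta^{\ast})$ to remain in a compact subset of $S\setminus D$ on a sufficiently small punctured disc around $0$, whence a Kwack--Kobayashi type removable singularity statement produces the holomorphic extension $\overline{\gamma}\colon\Delta\to S$.

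The main obstacle is the curvature estimate. In the pure Hodge case, Griffiths' formula together with semi-negativity of the Hodge metric gives a clean negative contribution along Higgs directions; in the mixed setting, working naively with a single Hodge metric on $\widetilde{H}$ produces cross-terms between weight pieces that can destroy the estimate. The admissibility hypothesis, through Proposition \ref{prop_Kashiwara_extendHodge} and the compatibility of $\theta$ with $\widetilde{W}_{\bullet}$, is precisely what yields a block structure reducing the curvature computation on $\widetilde{H}$ to a sum of pure-weight computations on $\mathrm{Gr}^{W}_{m}\widetilde{H}$. Controlling the asymptotic behavior of the Hodge metric near $E\setminus D$ via Kato--Nakayama--Usui type norm estimates, and handling the codimension-two locus $Z$, constitute the remaining technical content.
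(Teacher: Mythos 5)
Your overall strategy (Viehweg--Zuo Higgs subsheaf $\to$ Finsler pseudometric on $T(-\log D)$ $\to$ curvature estimate $\to$ extension criterion from \cite{DLSZ}) is the same as the paper's, but two steps as you describe them have genuine gaps. First, the mixed-Hodge issue you flag at the end is not actually resolved by your proposal: admissibility does \emph{not} furnish a metric splitting of $\widetilde{H}$ for which the curvature computation decomposes into pure-weight blocks --- any $C^\infty$ splitting of the weight filtration is non-flat and its second fundamental form produces exactly the cross-terms you worry about. The paper avoids this entirely by a different device: letting $m_0$ be minimal with $\widetilde{L}\subset W_{m_0}(\widetilde{H}^w)$, it projects the whole Higgs subsheaf generated by $\widetilde{L}$ into the single pure graded piece ${\rm Gr}^W_{m_0}(\widetilde{H})$ (into which $\widetilde{L}$ still injects by minimality of $m_0$), and runs the entire curvature argument there with the honest polarized Hodge metric and Griffiths' formula. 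Some such reduction to a pure variation is needed; "the splitting afforded by admissibility" does not supply it. Relatedly, your nondegeneracy claim ("for some $k$, $\theta^k$ is generically injective along $\gamma$") does not follow from generation plus Zariski density alone; the paper invokes the generic injectivity of $\rho_{m_0,1}$ (\cite[Theorem D]{Deng2022}), which is where maximal variation is genuinely used.

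Second, your concluding step proves the wrong inequality and draws the wrong consequence. The self-bounding estimate $\partial\overline{\partial}\log\gamma^\ast F\geq c\,\gamma^\ast F$ plus Ahlfors--Schwarz gives $\gamma^\ast F\lesssim\omega_P$, which yields Brody-type degeneracy but not the big Picard statement; and the claim that this "forces $\gamma(\Delta^\ast)$ to remain in a compact subset of $S\setminus D$" is false in general --- the extension $\overline{\gamma}(0)$ is allowed to land on $D$ or $E$. What the extension criterion \cite[Theorem A]{DLSZ} actually requires is the lower bound $\partial\overline{\partial}\log|\partial/\partial z|^2_{\gamma^\ast h}\geq\gamma^\ast(\omega)$ for a K\"ahler form $\omega$ on the \emph{compact} $S$, and this is why the paper rescales so that $\Theta_{g_\alpha}(\widetilde{L})\geq\omega$ and chooses, for the given curve, the \emph{first} index $n_\gamma$ with $\tau_{\gamma,n_\gamma}\neq 0$: minimality forces the image of $\iota_{n_\gamma}(N^{\otimes n_\gamma})$ into $\ker\theta_\gamma$, so Griffiths' formula kills the Hodge-metric curvature term and leaves exactly $\frac{1}{n_\gamma}\gamma^\ast\Theta_{g_\alpha}(\widetilde{L})\geq\frac{1}{n_\gamma}\gamma^\ast\omega$. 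You should also incorporate the logarithmic twist $g_\alpha=(\prod_i\log|s_i|^2_{g_i})^\alpha g_{\widetilde{L}}$ together with the Cattani--Kaplan--Schmid norm estimate (Proposition \ref{prop_MHmetric_upperbound}); without it the pseudometric is not continuous up to $E$ and the cited criterion does not apply.
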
      
	The proof of Theorem \ref{thm_Hodge_Picard} will occupy the remainder of this section. The argument is inspired by the work of Deng-Lu-Sun-Zuo \cite{DLSZ}.
	\subsubsection{}
	Using the notations and assumptions introduced in Definition \ref{defn_VMHS_maxvar} and Theorem \ref{thm_Hodge_Picard}, we proceed as follows. Through a sequence of smooth blowups, we may assume that there exists a projective birational morphism $\pi: \widetilde{S} \to S$ such that $\widetilde{S}$ is a smooth projective variety and $\pi^{-1}(D)$, $\pi^{-1}(E)$ are simple normal crossing divisors on $\widetilde{S}$. With a slight abuse of notation, let $(\widetilde{H}, \theta)$ denote the lower canonical system of logarithmic Higgs bundles on $(\widetilde{S}, \pi^{-1}(E))$, associated with the pullback $\pi^{\ast}(\bV)$.     
	Then there exists an effective $\pi$-exceptional divisor $E_0$ such that the following conditions are satisfied:  
	\begin{enumerate}  
		\item The line bundle $\widetilde{L} := \pi^\ast(L) \otimes \sO_{\widetilde{S}}(-E_0)$ is ample.  
		\item There exists a natural inclusion $\widetilde{L} \subset \widetilde{H}^{w}$.  
		\item Let $(\bigoplus_{p\leq w} L^p, \theta)$ denote the logarithmic Higgs subsheaf generated by $L^0 := \widetilde{L}$, where $L^p \subset \widetilde{H}^{w-p}$. Then the logarithmic Higgs field  
		$$  
		\theta: L^p \to L^{p+1} \otimes \Omega_{\widetilde{S}}(\log \pi^{-1}(E))  
		$$  
		is holomorphic over $\widetilde{S} \setminus \pi^{-1}(D)$ for all $p \geq 0$, i.e.,  
		$$  
		\theta(L^p) \subset L^{p+1} \otimes \Omega_{\widetilde{S}}(\log \pi^{-1}(D)), \quad \forall p \geq 0.  
		$$  
	\end{enumerate}
	Let $m_0$ denote the index such that $\widetilde{L} \subset W_{m_0}(\widetilde{H}^w)$ and $\widetilde{L} \nsubseteq W_{m_0-1}(\widetilde{H}^w)$. Then there exists a natural inclusion $\widetilde{L} \subset {\rm Gr}^W_{m_0}(\widetilde{H}^w)$. Let $(\bigoplus_{p\geq 0} L^p_{m_0}, \theta)$ represent the logarithmic Higgs subsheaf of $({\rm Gr}^W_{m_0}(\widetilde{H}), \theta)$ generated by $L^0_{m_0} := \widetilde{L}$, where $L^p_{m_0} \subset {\rm Gr}^W_{m_0}(\widetilde{H}^{w-p})$. Then the logarithmic Higgs field  
	$$  
	\theta: L^p_{m_0} \to L^{p+1}_{m_0} \otimes \Omega_{\widetilde{S}}(\log \pi^{-1}(E))  
	$$  
	is holomorphic over $\widetilde{S} \setminus \pi^{-1}(D)$ for all $p \geq 0$. 
	
	Consider the following diagram:  
	\begin{align*}  
		L^0_{m_0} \stackrel{\theta}{\to} L^1_{m_0} \otimes \Omega_{\widetilde{S}}(\log \pi^{-1}(D)) \stackrel{\theta \otimes {\rm Id}}{\to} L^2_{m_0} \otimes \Omega^{\otimes 2}_{\widetilde{S}}(\log \pi^{-1}(D)) \to \cdots.  
	\end{align*}  
	This induces a map  
	$$  
	\tau_{m_0,p}: L^0_{m_0} \to {\rm Gr}^W_{m_0}(\widetilde{H}^{w-p}) \otimes \Omega^{\otimes p}_{\widetilde{S}}(\log \pi^{-1}(D))  
	$$  
	for all $p \geq 0$.  
	This further induces a map  
	\begin{align}\label{align_first_theta}  
		\rho_{m_0,p}: T^{\otimes p}_{\widetilde{S}}(-\log \pi^{-1}(D)) \to \widetilde{L}^{-1} \otimes {\rm Gr}^W_{m_0}(\widetilde{H}^{w-p})  
	\end{align}  
	for all $p \geq 0$. According to \cite[Theorem D]{Deng2022}, the map $\rho_{m_0,1}$ is generically injective. 
	\subsubsection{}
	We now define a continuous Hermitian metric on $\widetilde{L}^{-1} \otimes {\rm Gr}^W_{m_0}(\widetilde{H})$, utilizing techniques that have been developed in \cite{VZ2003,PTW2019,DLSZ}. Let $g_{\widetilde{L}}$ denote a smooth Hermitian metric on $\sO_{\widetilde{S}}(\widetilde{L})$ with positive curvature. Let $\pi^{-1}(E) = \sum E_i$ represent the irreducible decomposition, and let $g_{i}$ denote a Hermitian metric on $\sO_{\widetilde{S}}(E_i)$ for each $i$. Let $s_i \in \Gamma(\widetilde{S}, \sO_{\widetilde{S}}(E_i))$ represent the defining section of $E_i$, and define $g_{\alpha} := (\prod_i \log |s_i|^2_{g_i})^{\alpha} g_{\widetilde{L}}$. Let $h_{m_0}$ denote a Hodge metric on ${\rm Gr}^W_{m_0}(H)$, regarded as a singular Hermitian metric on ${\rm Gr}^W_{m_0}(\widetilde{H})$. 
	\begin{prop}\label{prop_MHmetric_upperbound}  
		There exists an integer $N$ such that the following statements are satisfied. Let $x$ be a point on $E$, and let $(U; z_1, \dots, z_n)$ denote holomorphic local coordinates on an open neighborhood $U$ of $x = (0, \dots, 0)$ in $X$, with the property that $E = \{z_1 \cdots z_r = 0\}$. Let $s \in {\rm Gr}^W_{m_0}(\widetilde{H})(U)$ be a holomorphic section. Then,  
		\begin{align}\label{align_norm_upperbound}  
			|s|_{h_{m_0}} \lesssim |z_1 \cdots z_r|^{-N}  
		\end{align}  
		near $x$.
	\end{prop}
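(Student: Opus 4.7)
The proof reduces to the classical asymptotic estimate for the Hodge metric on the lower canonical extension of a pure polarized variation of Hodge structures. Indeed, the statement concerns only the pure graded piece $\mathrm{Gr}^W_{m_0}(\bV)$, which is a graded $\bR$-polarized admissible variation of \emph{pure} Hodge structures of weight $m_0$, so the weight filtration on $\bV$ itself plays no further role. I may therefore work entirely with this pure VHS and its Hodge metric $h_{m_0}$ on $\mathrm{Gr}^W_{m_0}(\widetilde{H})$, localized to the polydisc chart $(U;z_1,\dots,z_n)$ with $E\cap U=\{z_1\cdots z_r=0\}$. The required bound will then follow from Schmid's nilpotent orbit theorem in the several-variable form of Cattani-Kaplan-Schmid.

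First I reduce to unipotent monodromy. By Borel's monodromy theorem the local monodromies $T_i$ around $\{z_i=0\}$ are quasi-unipotent, and a ramified cover $\rho(w_1,\dots,w_n)=(w_1^{k},\dots,w_r^{k},w_{r+1},\dots,w_n)$ renders the pullback $\rho^\ast\mathrm{Gr}^W_{m_0}(\bV)$ unipotent. A polynomial bound $|\rho^\ast s|\lesssim|w_1\cdots w_r|^{-N'}$ on the cover descends to $|s|_{h_{m_0}}\lesssim|z_1\cdots z_r|^{-N'/k}$ below, so I assume $T_i=\exp(N_i)$ with $N_i$ nilpotent. In this case the lower canonical extension is generated as an $\sO_{\overline S}$-module by the sections $\tilde v_I:=\exp\bigl(-\sum_{i=1}^{r}\tfrac{\log z_i}{2\pi\sqrt{-1}}N_i\bigr)v_I$ for a multi-valued flat frame $\{v_I\}$ on the universal cover $\bH^r\times\Delta^{n-r}$. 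A holomorphic section $s\in\mathrm{Gr}^W_{m_0}(\widetilde H)(U)$ then writes as $s=\sum_I g_I(z)\tilde v_I$ with each $g_I$ holomorphic on $\Delta^n$, so the triangle inequality reduces the problem to bounding $|\tilde v_I|_{h_{m_0}}$.

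Schmid's multi-variable nilpotent orbit theorem compares $h_{m_0}$ on each sector $\{-\log|z_1|>\cdots>-\log|z_r|>\epsilon,\ (z_{r+1},\dots,z_n)\in K\}$ with the model metric of the associated nilpotent orbit, and on the orbit one has the polynomial-in-log estimate
\begin{equation*}
|\tilde v_I|_{h_{m_0}}\;\lesssim\;\prod_{i=1}^{r}\bigl(-\log|z_i|\bigr)^{C}
\end{equation*}
for some integer $C$ depending only on the rank of $\widetilde V$ and on the monodromy weight filtrations $W(N_1),\,W(N_1+N_2),\,\dots,\,W(N_1+\cdots+N_r)$. Since $(-\log|z_i|)^{C}\lesssim|z_i|^{-1}$ as $|z_i|\to 0$, this forces $|s|_{h_{m_0}}\lesssim|z_1\cdots z_r|^{-N}$ for any integer $N\geq 1$ (indeed even for every real $N=\epsilon>0$). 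Permuting the coordinates in the sector condition covers the punctured polydisc by finitely many such sectors and yields (\ref{align_norm_upperbound}) uniformly near $x$. The only delicate point is the correct appeal to Schmid's norm estimate in several variables; I do not expect a further obstacle, because only an upper bound is required rather than the sharp two-sided asymptotics of Cattani-Kaplan-Schmid-Kashiwara or Kato-Nakayama-Usui, and hence neither the Cattani-Kaplan-Schmid splitting nor the full machinery of admissible norms is needed---only the polynomial-in-log growth of flat multi-valued sections, which is the most classical consequence of the $SL_2$-orbit theorem.
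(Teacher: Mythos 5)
Your proof is correct and follows essentially the same route as the paper: reduce to the pure graded piece, write holomorphic sections of the lower canonical extension in terms of the adapted frame built from multivalued flat sections, and invoke the Schmid/Cattani-Kaplan-Schmid polynomial-in-log norm estimate, then absorb the logs into a small negative power of $|z_1\cdots z_r|$. The only cosmetic difference is that you first pass to a ramified cover to reduce to unipotent monodromy, whereas the paper handles the quasi-unipotent case directly by building the eigenvalues $\alpha_i$ into the adapted frame (\ref{align_adapted_frame}) and citing \cite[Theorem 5.21]{Cattani_Kaplan_Schmid1986}.
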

	\begin{proof}    	
		It suffices to establish inequality (\ref{align_norm_upperbound}) for any holomorphic section $s \in {\rm Gr}^{\widetilde{W}}_{m_0}(\widetilde{V})(U)$. Let $\bV_\infty$ denote the space of germs of multivalued flat sections of $({\rm Gr}^W_{m_0}(V), \nabla)$ at $x$. Define $E_i := \{z_i = 0\}$ for $i = 1, \dots, r$. Let $T_1, \dots, T_r$ represent the monodromy operators of ${\rm Gr}^W_{m_0}(\bV_{\bC})$ around $E_1, \dots, E_r$, respectively. Since $T_1, \dots, T_r$ are pairwise commutative, there exists a finite decomposition  
		$$
		\bV_\infty = \bigoplus_{0 \leq \alpha_1, \dots, \alpha_r < 1} \bV_{\alpha_1, \dots, \alpha_r}
		$$  
		such that $(T_i - e^{2\pi\sqrt{-1}\alpha_i}{\rm Id})$ is unipotent on $\bV_{\alpha_1, \dots, \alpha_r}$ for each $i = 1, \dots, r$.  
		
		Let  
		$$
		v_1, \dots, v_k \in \bigcup_{0 \leq \alpha_1, \dots, \alpha_r < 1} \bV_{\alpha_1, \dots, \alpha_r}
		$$  
		form a basis of $\bV_\infty$. Then $\widetilde{v_1}, \dots, \widetilde{v_k}$, determined by  
		\begin{align}\label{align_adapted_frame}  
			\widetilde{v_j} := {\rm exp}\left(\sum_{i=1}^r \log z_i (\alpha_i{\rm Id} + N_i)\right)v_j \quad \text{if } v_j \in \bV_{\alpha_1, \dots, \alpha_r}, \quad \forall j = 1, \dots, k,
		\end{align}  
		be a local frame of $\widetilde{V}$ at $x$. According to \cite[Theorem 5.21]{Cattani_Kaplan_Schmid1986}, one has  
		$$
		|\widetilde{v_j}|_{h_{m_0}} \lesssim |z_1 \cdots z_r|^{-N}, \quad \forall j = 1, \dots, k,
		$$  
		when $N$ is sufficiently large. Since $\overline{S}$ is compact, there exists a uniform constant $N$ such that (\ref{align_norm_upperbound}) holds. This concludes the proof.
	\end{proof}
	According to Proposition \ref{prop_MHmetric_upperbound}, there exists an integer $\alpha_0$ such that for every $\alpha \geq \alpha_0$, the metric $g_{\alpha}^{-1} h_{m_0}$ extends to a continuous Hermitian metric (still denoted by $g_{\alpha}^{-1} h_{m_0}$) on $\widetilde{L}^{-1} \otimes {\rm Gr}^W_{m_0}(\widetilde{H})$, with the property that $g_{\alpha}^{-1} h_{m_0}$ degenerates along $\pi^{-1}(E)$, i.e., $g_{\alpha}^{-1} h_{m_0}(s) \equiv 0$ for every point $s \in \pi^{-1}(E)$.
	
	After appropriately rescaling $h_i$, we may assume that  
	\begin{align}\label{align_curvature_halpha}  
		\Theta_{g_{\alpha}}(\sO_{\widetilde{S}}(\widetilde{L})) \geq \omega,  
	\end{align}  
	where $\omega$ denotes some K\"ahler form on $\widetilde{S}$ (\cite[Lemma 2.16]{DLSZ}). In the remainder of this section, we fix $g_i$ and $\alpha > 0$ such that the Hermitian metric $g_{\alpha}^{-1} h_{m_0}$ on $\widetilde{L}^{-1} \otimes {\rm Gr}^W_{m_0}(\widetilde{H})$ degenerates along $\pi^{-1}(E)$, i.e., $g_{\alpha}^{-1} h_{m_0}|_{\pi^{-1}(E)} \equiv 0$, and such that (\ref{align_curvature_halpha}) holds.
	\subsubsection{}
	\begin{defn}[Finsler Metric]  
		Let $E$ be a holomorphic vector bundle over a complex manifold $X$. A Finsler pseudometric on $E$ is a continuous function $h: E \to [0, \infty)$ satisfying the following property:  
		$$  
		h(av) = |a| h(v), \quad \forall a \in \bC, \quad \forall v \in E.  
		$$   
	\end{defn} 
	Let $U_0 \subset \widetilde{S} \setminus \pi^{-1}(D)$ denote a Zariski-dense open subset such that $\rho_{m_0,1}$ is injective on $U_0$. Let $\gamma': \Delta^\ast \to \widetilde{S} \setminus \pi^{-1}(D)$ be the holomorphic map lifting $\gamma$. It suffices to show that $\gamma'$ extends to a holomorphic morphism $\Delta \to \widetilde{S}$. Since ${\rm Im}(\gamma')$ is Zariski-dense in $\widetilde{S}$ and $\rho_{m_0,1}$ is injective on $U_0$, the natural map  
	$$  
	\tau_{\gamma,1}: \gamma^\ast \widetilde{L} \stackrel{\gamma^\ast \tau_1}{\to} \gamma^\ast {\rm Gr}^W_{m_0}(\widetilde{H}^{w-1}) \otimes \gamma^\ast \Omega_{\widetilde{S}}(\log \pi^{-1}(D)) \stackrel{{\rm Id} \otimes d\gamma}{\to} \gamma^\ast {\rm Gr}^W_{m_0}(\widetilde{H}^{w-1}) \otimes \Omega_{\Delta^\ast}  
	$$  
	is nonzero. Hence, there exists a minimal integer $n_\gamma \geq 1$ such that  
	$$  
	\tau_{\gamma,n_\gamma}: \gamma^\ast \widetilde{L} \stackrel{\gamma^\ast \tau_{n_\gamma}}{\to} \gamma^\ast {\rm Gr}^W_{m_0}(\widetilde{H}^{w-n_\gamma}) \otimes \gamma^\ast \Omega^{\otimes n_\gamma}_{\widetilde{S}}(\log \pi^{-1}(D)) \stackrel{{\rm Id} \otimes d\gamma}{\to} \gamma^\ast {\rm Gr}^W_{m_0}(\widetilde{H}^{w-n_\gamma}) \otimes \Omega^{\otimes n_\gamma}_{\Delta^\ast}  
	$$  
	is nonzero, while the composition  
	$$  
	\gamma^\ast \widetilde{L} \stackrel{\tau_{\gamma,n_\gamma}}{\to} \gamma^\ast {\rm Gr}^W_{m_0}(\widetilde{H}^{w-n_\gamma}) \otimes \Omega^{\otimes n_\gamma}_{\Delta^\ast} \to \gamma^\ast {\rm Gr}^W_{m_0}(\widetilde{H}^{w-n_\gamma-1}) \otimes \Omega^{\otimes (n_\gamma+1)}_{\Delta^\ast}  
	$$  
	vanishes. This yields a nonzero map  
	$$  
	\widetilde{L} \to {\rm Gr}^W_{m_0}(\widetilde{H}^{w-n_\gamma}) \otimes \Omega^{\otimes n_\gamma}_{\widetilde{S}}(\log \pi^{-1}(D)).  
	$$  
	This induces a map  
	\begin{align}  
		\iota_{n_\gamma}: T^{\otimes n_\gamma}_{\widetilde{S}}(-\log \pi^{-1}(D)) \to \widetilde{L}^{-1} \otimes {\rm Gr}^W_{m_0}(\widetilde{H}^{n_\gamma}).  
	\end{align}  
	The pullback metric $\iota_{n_\gamma}^\ast(g_\alpha^{-1}h_{m_0})$ defines a Finsler pseudometric $h_\gamma$ on $T_{\widetilde{S}}(-\log \pi^{-1}(D))$ via  
	$$  
	|v|_{h_\gamma} := |\iota_{n_\gamma}(v^{\otimes n_\gamma})|^{\frac{1}{n_\gamma}}_{g_\alpha^{-1}h_{m_0}}, \quad v \in T_{\widetilde{S}}(-\log \pi^{-1}(D)).  
	$$ 
	\begin{prop}\label{prop_negative_curvature}  
		With the same notations as above, $|\frac{\partial}{\partial z}|^2_{\gamma^\ast h_\gamma}$ is not identically zero, and the following inequality holds in the sense of currents:  
		$$  
		\partial\overline{\partial} \log \left| \frac{\partial}{\partial z} \right|^2_{\gamma^\ast h_\gamma} \geq \frac{1}{n} \gamma^\ast (\Theta_{g_\alpha}(L)).  
		$$  
	\end{prop}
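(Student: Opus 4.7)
The plan is to reduce the curvature inequality to an estimate for $\partial\overline{\partial}\log$ applied to the norm of a single holomorphic section on $\Delta^\ast$, and then invoke the Griffiths--Viehweg--Zuo curvature computation on the pure polarized variation ${\rm Gr}^W_{m_0}(\bV)$, in the spirit of \cite{DLSZ}. Passing to the graded piece is what makes the mixed Hodge setting tractable: $h_{m_0}$ is a genuine Hodge metric on ${\rm Gr}^W_{m_0}(\widetilde H)$ and hence satisfies the classical Griffiths curvature formula.

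Concretely, I would set $s:=\gamma^\ast\iota_{n_\gamma}\bigl((\partial/\partial z)^{\otimes n_\gamma}\bigr)$, a holomorphic section of $E_\gamma:=\gamma^\ast\bigl(\widetilde L^{-1}\otimes{\rm Gr}^W_{m_0}(\widetilde H^{w-n_\gamma})\bigr)$ on $\Delta^\ast$. The logarithmic poles of $\iota_{n_\gamma}$ along $\pi^{-1}(D)$ do not appear because $\gamma$ avoids $D$. The minimality in the definition of $n_\gamma$ forces $s\not\equiv 0$, and by construction of $h_\gamma$ one has $|\partial/\partial z|^{2n_\gamma}_{\gamma^\ast h_\gamma}=|s|^2_{\gamma^\ast(g_\alpha^{-1}h_{m_0})}$. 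This already establishes the nonvanishing assertion and reduces the desired inequality to
\[
\partial\overline{\partial}\log|s|^2_{\gamma^\ast(g_\alpha^{-1}h_{m_0})}\;\geq\;\gamma^\ast\Theta_{g_\alpha}(\widetilde L)
\]
as currents on $\Delta^\ast$; zeros of $s$ contribute a non-negative Poincar\'e--Lelong term, so it suffices to prove this on $\{s\neq 0\}$.

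On that locus, I would apply the classical pointwise lower bound
\[
\sqrt{-1}\,\partial\overline{\partial}\log|s|^2_h\;\geq\;-\sqrt{-1}\,\frac{\bigl\langle\Theta(E_\gamma,h)\,s,s\bigr\rangle_h}{|s|^2_h},\qquad h=\gamma^\ast(g_\alpha^{-1}h_{m_0}),
\]
and decompose along the tensor-product structure: $\Theta(E_\gamma,h)=-\gamma^\ast\Theta_{g_\alpha}(\widetilde L)\cdot{\rm Id}+\gamma^\ast\Theta_{h_{m_0}}\bigl({\rm Gr}^W_{m_0}(\widetilde H^{w-n_\gamma})\bigr)$. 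The first summand delivers the target lower bound $\gamma^\ast\Theta_{g_\alpha}(\widetilde L)$, so everything reduces to showing that the Hodge-metric contribution pairs non-positively with $s$.

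The principal obstacle is precisely this last step, where the Hodge-theoretic input enters essentially. Using Griffiths' curvature formula for the pure polarized VHS ${\rm Gr}^W_{m_0}(\bV)$, the curvature $\sqrt{-1}\Theta_{h_{m_0}}$ on each Hodge piece splits into a $\theta^\ast\wedge\theta$--term and a $\theta\wedge\theta^\ast$--term, with $\theta$ the graded Higgs field on ${\rm Gr}^W_{m_0}(\widetilde H)$ and $\theta^\ast$ its Hermitian adjoint. The $\theta^\ast\wedge\theta$--term contributes with the correct sign automatically. The $\theta\wedge\theta^\ast$--term, applied to $s$ and contracted against $\gamma'(\partial/\partial z)\otimes\overline{\gamma'(\partial/\partial z)}$, involves $\gamma^\ast\theta(s)$ evaluated on $\partial/\partial z$; but this is exactly the $(n_\gamma+1)$-st iterate of the Higgs field on $\widetilde L$ along $\gamma$, which vanishes by the \emph{minimality} of $n_\gamma$. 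This cancellation---a formal consequence of Griffiths transversality together with the iterative construction of $\iota_{n_\gamma}$---is the heart of the argument and the key technical step; once it is in place, combining the two summands yields the desired current inequality on $\{s\neq 0\}$, and extension across the zero locus of $s$ by Poincar\'e--Lelong concludes the proof.
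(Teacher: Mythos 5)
Your proposal is correct and follows essentially the same route as the paper: Poincar\'e--Lelong plus curvature decrease to reduce to the curvature of $\widetilde L^{-1}\otimes{\rm Gr}^W_{m_0}(\widetilde H^{w-n_\gamma})$ along the image of $\iota_{n_\gamma}$, the tensor-product split isolating $-\gamma^\ast\Theta_{g_\alpha}(\widetilde L)$, and Griffiths' curvature formula on the pure graded piece with the minimality of $n_\gamma$ killing the term involving $\theta_\gamma(s)$. The only cosmetic difference is that you argue via the holomorphic section $s=\gamma^\ast\iota_{n_\gamma}\bigl((\partial/\partial z)^{\otimes n_\gamma}\bigr)$ and the inequality $\partial\overline\partial\log|s|^2_h\geq-\langle\Theta s,s\rangle_h/|s|^2_h$, whereas the paper phrases the same computation through the saturated line subbundle $N\subset\gamma^\ast T_{\widetilde S}(-\log\pi^{-1}(D))$.
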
  	
	\begin{proof}  
		Observe that $\gamma^\ast h_\gamma$ defines a (possibly degenerate) generically smooth continuous Hermitian metric on $T_{\Delta^\ast}$. The first claim follows from the fact that $\tau_{\gamma,n_\gamma}$ is nonzero.  
		
		By the Poincar\'e-Lelong equation, we have  
		$$  
		\partial\overline{\partial} \log \left| \frac{\partial}{\partial z} \right|^2_{\gamma^\ast h_\gamma} = -\Theta_{\gamma^\ast h_\gamma}(T_{\Delta^\ast}) + R,  
		$$  
		where $R$ denotes the ramification divisor of $\gamma$. Let $N$ denote the saturation of the image of $d\gamma: T_{\Delta^\ast} \to \gamma^\ast T_{\widetilde{S}}(-\log \pi^{-1}(D))$. Then one has  
		\begin{align*}  
			\Theta_{\gamma^\ast h_\gamma}(T_{\Delta^\ast}) &\leq \Theta_{\gamma^\ast h_\gamma}(N) = \frac{1}{n_\gamma} \Theta_{\gamma^\ast h_\gamma^{n_\gamma}}(N^{\otimes n_\gamma}) \\  
			&\leq \frac{1}{n_\gamma} \gamma^\ast \Theta_{h_\gamma^{n_\gamma}}\left(T^{\otimes n_\gamma}_{\widetilde{S}}(-\log \pi^{-1}(D))\right)\big|_{N^{\otimes n_\gamma}} \\  
			&\leq \frac{1}{n_\gamma} \gamma^\ast \Theta_{g_\alpha^{-1}h_{m_0}}\left(\widetilde{L}^{-1} \otimes {\rm Gr}^W_{m_0}(\widetilde{H}^{n_\gamma})\right)\big|_{\gamma^\ast(\iota_{n_\gamma}(N^{\otimes n_\gamma}))} \\  
			&= -\frac{1}{n_\gamma} \gamma^\ast \Theta_{g_\alpha}(\widetilde{L}) + \frac{1}{n_\gamma} \gamma^\ast \Theta_{h_{m_0}}\left({\rm Gr}^W_{m_0}(\widetilde{H}^{n_\gamma})\right)\big|_{\gamma^\ast(\iota_{n_\gamma}(N^{\otimes n_\gamma}))},  
		\end{align*}  
		as $(1,1)$-forms on $\Delta^\ast$.  
		
		By the definition of $n_\gamma$, it follows that $\gamma^\ast(\iota_{n_\gamma}(N^{\otimes n_\gamma}))$ lies in the kernel of the Higgs field  
		$$  
		\theta_\gamma: \gamma^\ast {\rm Gr}^W_{m_0}(\widetilde{H}^{n_\gamma}) \to \gamma^\ast {\rm Gr}^W_{m_0}(\widetilde{H}^{n_\gamma+1}) \otimes \Omega_{\Delta^\ast}  
		$$  
		of the Higgs sheaf $(\gamma^\ast{\rm Gr}^W_{m_0}(H), \theta_\gamma := \gamma^\ast(\theta))$.  		
		Using Griffiths' curvature formula  
		$$  
		\gamma^\ast \Theta_{h_{m_0}}({\rm Gr}^W_{m_0}(H)) + \theta_\gamma \wedge \overline{\theta_\gamma} + \overline{\theta_\gamma} \wedge \theta_\gamma = 0,  
		$$  
		we deduce that  
		$$  
		\gamma^\ast \Theta_{h_{m_0}}\left({\rm Gr}^W_{m_0}(H^{n_\gamma})\right)\big|_{\gamma^\ast(\iota_{n_\gamma}(N^{\otimes n_\gamma}))} = -\theta_\gamma \wedge \overline{\theta_\gamma}\big|_{\gamma^\ast(\iota_{n_\gamma}(N^{\otimes n_\gamma}))} \leq 0.  
		$$  		
		Combining the above formulas, the proposition is proved.  
	\end{proof}
	\subsubsection{}
	To establish the main result of this section, we employ the following criterion by Deng-Lu-Sun-Zuo \cite{DLSZ}, which relates the big Picard-type theorem to the negativity of the complex sectional curvature of certain Finsler pseudometrics.    
	\begin{thm}{\cite[Theorem A]{DLSZ}}\label{thm_big_Picard_LSZ}  
		Let $(X, \omega)$ be a smooth projective variety equipped with a K\"ahler form $\omega$, and let $D$ be a simple normal crossing divisor on $X$. Let $\gamma: \Delta^\ast \to X \setminus D$ denote a holomorphic map. Assume that there exists a Finsler pseudometric $h$ on $T_X(-\log D)$ such that the following conditions are satisfied:  
		\begin{enumerate}  
			\item $\gamma^\ast h$ defines a Finsler pseudometric that is not identically zero.  
			\item The inequality  
			$$  
			\partial\overline{\partial} \log \left| \frac{\partial}{\partial z} \right|^2_{\gamma^\ast h} \geq \gamma^\ast(\omega)  
			$$  
			holds in the sense of currents.  
		\end{enumerate}  
		Then $\gamma$ extends to a holomorphic map $\overline{\gamma}: \Delta \to X$.  
	\end{thm}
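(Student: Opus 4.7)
My plan is the standard two-step proof of big Picard-type theorems: (A) an Ahlfors--Schwarz curvature estimate yielding a pointwise bound on the Finsler ``length'' of $\gamma'$ by the Poincar\'e density, followed by (B) a finite-area argument combined with Bishop's graph-extension theorem.

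\textbf{Step A (Ahlfors--Schwarz).} Let $\phi(z) := \log|\partial/\partial z|^2_{\gamma^*h}$ on $\Delta^\ast$, an upper semicontinuous function valued in $[-\infty,\infty)$, not identically $-\infty$ by hypothesis~(1); hypothesis~(2) reads $\partial\bar\partial\phi \geq \gamma^*\omega$ as currents. Let $\omega_P = \lambda^2 \omega_0$ denote the Poincar\'e metric on $\Delta^\ast$, with $\omega_0 = \tfrac{\sqrt{-1}}{2}dz\wedge d\bar z$ and $\lambda^2(z) = (|z|^2(\log|z|^2)^2)^{-1}$. Compactness of $X$ and continuity of $h$ furnish a constant $C>0$ such that $h(v)^2 \leq C |v|^2_\omega$ for all $v \in T_X(-\log D)$, where $|\cdot|_\omega$ is the Hermitian norm induced by $\omega$. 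Therefore $e^\phi\omega_0 \leq C\gamma^*\omega$, and combined with~(2):
$$\partial\bar\partial\phi \geq C^{-1}e^\phi\omega_0.$$
This is the curvature hypothesis of the Ahlfors--Schwarz lemma. After regularization to handle the degeneracy of $e^\phi$ (e.g.\ replace $\phi$ with $\phi_\epsilon := \log(e^\phi + \epsilon\lambda^2)$, apply the classical Ahlfors--Schwarz inequality, and let $\epsilon \to 0$), I conclude
$$e^\phi \leq C'\lambda^2 \quad \text{on }\Delta^\ast.$$

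\textbf{Step B (Finite area and Bishop extension).} Define $\tilde\phi := \phi + 2\log|z|$, which is subharmonic on $\Delta^\ast$. Step~A yields
$$\tilde\phi(z) \leq \log C' - 2\log|\log|z|^2|,$$
bounded above on a neighborhood of $0$. By the classical extension theorem for subharmonic functions bounded above, $\tilde\phi$ extends to a subharmonic $\bar{\tilde\phi}$ on $\Delta$. Since $\log|z|$ is harmonic on $\Delta^\ast$, we have $\partial\bar\partial\phi = \partial\bar\partial\tilde\phi$ on each annulus $\rho<|z|<r$, whence
$$\int_{\rho<|z|<r} \gamma^*\omega \leq \int_{\rho<|z|<r} \partial\bar\partial\phi = \int_{\rho<|z|<r} \partial\bar\partial\tilde\phi \leq \int_{|z|<r} \partial\bar\partial\bar{\tilde\phi} < \infty,$$
since the distributional Laplacian of a subharmonic function is a positive Radon measure of locally finite mass on compacts. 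Letting $\rho\to 0$ gives $\int_{|z|<r}\gamma^*\omega < \infty$: the graph $\Gamma_\gamma\subset\Delta^\ast\times X$ has locally finite volume near $\{0\}\times X$ with respect to $\omega_\Delta + \omega$. By Bishop's extension theorem for analytic sets of finite volume, the closure $\bar\Gamma\subset\Delta_r\times X$ is an irreducible $1$-dimensional analytic subset. Its projection to $\Delta_r$ is a proper holomorphic map of degree one (it is so over $\Delta^\ast$), so the fiber over $0$ is a single point, and $\bar\Gamma$ is the graph of the desired holomorphic extension $\overline\gamma:\Delta\to X$.

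\textbf{Main obstacle.} The principal technical hurdle is Step~A: because $h$ is only a Finsler \emph{pseudo}-metric, $e^\phi$ can vanish on open subsets and $\phi$ can equal $-\infty$, so the classical Ahlfors--Schwarz lemma (formulated for Hermitian metrics) does not apply as a black box. Implementing the regularization $\phi_\epsilon := \log(e^\phi + \epsilon\lambda^2)$ (or an analogous scheme) and verifying that the curvature inequality $\partial\bar\partial\phi_\epsilon \geq c(\epsilon) e^{\phi_\epsilon}\omega_0$ survives intact as $\epsilon\to 0$ is the delicate part. Once this pointwise estimate is secured, Step~B proceeds as a relatively clean combination of the classical subharmonic extension result and Bishop's graph extension theorem.
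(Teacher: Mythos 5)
Note first that the paper offers no proof of this statement: it is imported verbatim as \cite[Theorem A]{DLSZ}, so the only meaningful comparison is with the argument in that reference. Your Step B --- deduce that $\gamma^\ast\omega$ has finite mass near the puncture from an upper bound on $\tilde\phi=\phi+2\log|z|$, then extend the graph by Bishop's theorem --- is correct and is essentially how \cite{DLSZ} also concludes. The gap is in Step A. The inequality $h(v)^2\le C|v|^2_\omega$ for $v\in T_X(-\log D)$ is false near $D$. Continuity of $h$ on the total space of the bundle $T_X(-\log D)$ over the compact $X$ gives $h(v)\le C|v|_g$ only for a smooth Hermitian metric $g$ on \emph{that} bundle; in coordinates where $D=\{z_1=0\}$, a frame of $T_X(-\log D)$ is $z_1\partial_{z_1},\partial_{z_2},\dots$, so for a tangent vector $v=a_1\partial_{z_1}+\cdots$ one has $|v|_g\asymp(|a_1/z_1|^2+|a_2|^2+\cdots)^{1/2}$, which dominates $|v|_\omega\asymp(|a_1|^2+\cdots)^{1/2}$ but is not dominated by it: the ratio $h(v)/|v|_\omega$ can blow up like $1/|z_1|$. (Already for $X=\bP^1$, $D=\{0\}$, $\gamma(z)=z$ and $h$ induced by a smooth metric on $T_{\bP^1}(-\log 0)$, one gets $e^{\phi}\asymp|z|^{-2}$ while $\gamma^\ast\omega\asymp\omega_0$.) Hence the self-referential curvature inequality $\partial\overline{\partial}\phi\ge C^{-1}e^\phi\omega_0$ is never established, and the Ahlfors--Schwarz lemma cannot be invoked; the regularization issue you single out as the ``main obstacle'' is secondary to this.

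The genuine difficulty is exactly that $h$ lives on the logarithmic tangent bundle, which is incommensurable with $\omega$ along $D$, so no pointwise Schwarz-type comparison between $e^\phi\omega_0$ and $\gamma^\ast\omega$ is available. In \cite{DLSZ} this is circumvented by Nevanlinna theory rather than by a pointwise estimate: a Green--Jensen integration of the hypothesis bounds the characteristic function $T_{\gamma,\omega}(r)$ by the circle averages of $\log^+|\gamma'|^2_h$; the bound $h\le C|\cdot|_g$ reduces these averages to terms of the form $\log^+\bigl|(z_i\circ\gamma)'/(z_i\circ\gamma)\bigr|$, which are controlled \emph{in the averaged sense} by the lemma on logarithmic derivatives for the punctured disc. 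This yields $T_{\gamma,\omega}(r)=O(\log\tfrac1r)$, hence the finite-area input your Step B requires. As written, your argument is complete only when $D=\emptyset$ (where $T_X(-\log D)=T_X$ and your comparison does hold); for the case of interest you must either replace Step A by the logarithmic-derivative-lemma argument or supply an entirely different proof of the pointwise bound $e^\phi\le C'\lambda^2$, which your current reasoning does not support.
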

	By (\ref{align_curvature_halpha}), Proposition \ref{prop_negative_curvature}, and Theorem \ref{thm_big_Picard_LSZ}, we conclude that the holomorphic map $\gamma': \Delta^\ast \to \widetilde{S} \setminus \pi^{-1}(D)$ extends to a holomorphic map $\overline{\gamma'}: \Delta \to \widetilde{S}$. Consequently, the composition $\pi \circ \overline{\gamma'}: \Delta \to S$ defines a holomorphic map that extends $\gamma$. This completes the proof of Theorem \ref{thm_Hodge_Picard}.
	\section{Admissible families of stable minimal models}\label{section_boundedness}	
	\subsection{Stable minimal models and their moduli}\label{section_moduli}
	We now review the main results from \cite{Birkar2022} that will be utilized in the subsequent sections. For the purposes of this article, all schemes are defined over ${\rm Spec}(\bC)$. A \emph{stable minimal model} is a triple $(X, B), A$, where $X$ is a reduced, connected, projective scheme of finite type over ${\rm Spec}(\bC)$, and $A, B \geq 0$ are $\bQ$-divisors satisfying the following conditions:  
	\begin{itemize}  
		\item $(X, B)$ is a projective, connected slc (semi-log-canonical) pair,  
		\item $K_X + B$ is semi-ample,  
		\item $K_X + B + tA$ is ample for some $t > 0$, and  
		\item $(X, B + tA)$ is slc for some $t > 0$.  
	\end{itemize}  
	Let  
	$$
	d \in \bN, \, c \in \bQ^{\geq 0}, \, \Gamma \subset \bQ^{>0} \text{ a finite set, and } \sigma \in \bQ[t].
	$$  
	A $(d, \Phi_c, \Gamma, \sigma)$-stable minimal model is a stable minimal model $(X, B), A$ satisfying the following conditions:  
	\begin{itemize}  
		\item $\dim X = d$,  
		\item the coefficients of $A$ and $B$ belong to $c \bZ^{\geq 0}$,  
		\item ${\rm vol}(A|_F) \in \Gamma$, where $F$ is any general fiber of the fibration $f: X \to Z$ determined by $K_X + B$, and  
		\item ${\rm vol}(K_X + B + tA) = \sigma(t)$ for $0 \leq t \ll 1$.  
	\end{itemize}  
	Let $S$ be a reduced scheme over ${\rm Spec}(\bC)$. A family of $(d, \Phi_c, \Gamma, \sigma)$-stable minimal models over $S$ consists of a projective morphism $X \to S$ of schemes and $\bQ$-divisors $A$ and $B$ on $X$, satisfying the following conditions:  
	\begin{itemize}  
		\item $(X, B + tA) \to S$ is a locally stable family (i.e., $K_{X/S} + B + tA$ is $\bQ$-Cartier) for every sufficiently small rational number $t \geq 0$,  
		\item $A = cN$, $B = cD$, where $N, D \geq 0$ are relative Mumford divisors, and  
		\item $(X_s, B_s), A_s$ is a $(d, \Phi_c, \Gamma, \sigma)$-stable minimal model for each point $s \in S$.  
	\end{itemize} 
	Let ${\rm Sch}_{\bC}^{\rm red}$ denote the category of reduced schemes defined over ${\rm Spec}(\bC)$. Define  
	$$
	\sM^{\rm red}_{\rm slc}(d, \Phi_c, \Gamma, \sigma): S \mapsto \{\text{families of } (d, \Phi_c, \Gamma, \sigma)\text{-stable minimal models over } S\},
	$$  
	a functor of groupoids over ${\rm Sch}_{\bC}^{\rm red}$.  
	
	\begin{thm}[Birkar \cite{Birkar2022}] \label{thm_moduli_stable_var}  
		There exists a proper Deligne-Mumford stack $\sM_{\rm slc}(d, \Phi_c, \Gamma, \sigma)$ over ${\rm Spec}(\bC)$ such that the following properties hold:  
		\begin{itemize}  
			\item $\sM_{\rm slc}(d, \Phi_c, \Gamma, \sigma)|_{{\rm Sch}_{\bC}^{\rm red}} = \sM^{\rm red}_{\rm slc}(d, \Phi_c, \Gamma, \sigma)$ as functors of groupoids.  
			\item $\sM_{\rm slc}(d, \Phi_c, \Gamma, \sigma)$ admits a projective good coarse moduli space $M_{\rm slc}(d, \Phi_c, \Gamma, \sigma)$.  
		\end{itemize}  
	\end{thm}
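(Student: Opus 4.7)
The plan is to realize $\sM_{\rm slc}(d, \Phi_c, \Gamma, \sigma)$ as a locally closed substack of a Hilbert stack carved out by conditions that are visibly algebraic, then verify the stack axioms and appeal to the Keel--Mori theorem for the coarse moduli space. The four cornerstones are (i) boundedness, (ii) local closedness in a Hilbert scheme via pluricanonical embeddings, (iii) the valuative criteria, and (iv) the existence of a projective good coarse moduli space.

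First I would establish boundedness: the family of $(d,\Phi_c,\Gamma,\sigma)$-stable minimal models forms a bounded class. This is where the real work lies and where one must invoke the machinery built up in \cite{Birkar2022}, namely the boundedness results for stable minimal models, together with the boundedness of log Calabi--Yau fibrations with bounded general fiber and bounded base. In practice, one shows that there exists a uniform integer $N$ such that for every $(d,\Phi_c,\Gamma,\sigma)$-stable minimal model $(X,B),A$, the $\bQ$-divisor $N(K_X+B+t_0 A)$ is a very ample Cartier divisor for a fixed small rational $t_0>0$ with uniformly bounded Hilbert polynomial $P$. This is the step I expect to be the main obstacle; it relies on Hacon--McKernan--Xu type boundedness for slc pairs of general type together with DCC statements for volumes.

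Once boundedness is in place, pluricanonical embeddings allow one to view each stable minimal model inside $\bP^{P(N)-1}$, and the $\bQ$-divisors $A,B$ as relative Mumford divisors of bounded degree. I would then consider the Hilbert scheme $H$ parameterizing subschemes of $\bP^{P(N)-1}$ with Hilbert polynomial $P$, together with auxiliary Hilbert schemes parameterizing the relative Mumford divisors $N,D$ with $A=cN$, $B=cD$. Inside the product $H^{\rm tot}$ one identifies the locus $Z \subset H^{\rm tot}$ where:
\begin{enumerate}
\item the universal family $\sX\to H^{\rm tot}$ is flat with reduced, connected, equidimensional geometric fibers of dimension $d$;
\item the pair $(\sX,\sB+t\sA)\to H^{\rm tot}$ is a locally stable family for all sufficiently small rational $t\geq 0$;
\item $K_{\sX/H^{\rm tot}}+\sB$ is relatively semi-ample and $K_{\sX/H^{\rm tot}}+\sB+t\sA$ is relatively ample for some $t>0$;
\item the volume and fiberwise numerical invariants agree with $\sigma$ and $\Gamma$.
\end{enumerate}
Each of these conditions is either open, closed, or locally closed on the base by standard constructibility results for locally stable families (Koll\'ar, Alexeev, Kov\'acs--Patakfalvi), so $Z$ is a locally closed subscheme of $H^{\rm tot}$, invariant under the natural $\mathrm{PGL}$-action by reparametrization.

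Next I would identify $\sM_{\rm slc}(d,\Phi_c,\Gamma,\sigma)$ with the quotient stack $[Z/\mathrm{PGL}_{P(N)}]$. Since $\mathrm{PGL}$ acts with finite stabilizers (the automorphism group of a stable minimal model is finite, because $K_X+B+tA$ is ample and defines the polarization up to finite ambiguity), this quotient is Deligne--Mumford. Separatedness and properness would follow from the valuative criteria: given a DVR $R$ with fraction field $K$, any family over $\mathrm{Spec}\, K$ extends uniquely after a finite base change to a family over $\mathrm{Spec}\, R$ by the uniqueness of minimal models of slc pairs together with semistable reduction for locally stable families, exactly as in the KSBA case. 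Finally, the existence of a projective good coarse moduli space $M_{\rm slc}(d,\Phi_c,\Gamma,\sigma)$ follows by combining the Keel--Mori theorem (giving a proper coarse space) with the ampleness of an appropriate determinant line bundle (the CM line bundle or a suitable power of $\det f_\ast \sO_\sX(m(K_{\sX/Z}+\sB+t_0\sA))$), whose positivity on proper curves is a Hodge-theoretic consequence of Koll\'ar's ampleness lemma applied to the locally stable family. The compatibility $\sM_{\rm slc}(d,\Phi_c,\Gamma,\sigma)|_{{\rm Sch}_\bC^{\rm red}}=\sM^{\rm red}_{\rm slc}(d,\Phi_c,\Gamma,\sigma)$ is then a tautology from the construction.
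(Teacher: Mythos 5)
Your proposal follows essentially the same route as the paper, whose proof is simply a citation to Birkar's construction realizing $\sM_{\rm slc}(d,\Phi_c,\Gamma,\sigma)$ as the quotient stack $[M^e_{\rm slc}(d,\Phi_c,\Gamma,\sigma,a,r,\bP^n)/{\rm PGL}_{n+1}(\bC)]$ of a locally closed parameter scheme by ${\rm PGL}$, with projectivity of the coarse space obtained via Koll\'ar's ampleness criterion. The genuinely hard input --- boundedness, which for stable minimal models requires controlling polarized log Calabi--Yau fibrations and not merely the general-type/KSBA case --- is correctly identified and deferred to \cite{Birkar2022}, exactly as the paper does.
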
  
	\begin{proof}  
		See the proof of \cite[Theorem 1.14]{Birkar2022}. Using the notations in \cite[\S 10.7]{Birkar2022}, we have  
		$$
		\sM_{\rm slc}(d, \Phi_c, \Gamma, \sigma) = \left[M_{\rm slc}^e(d, \Phi_c, \Gamma, \sigma, a, r, \bP^n)/{\rm PGL}_{n+1}(\bC)\right],
		$$  
		where the right-hand side denotes the stacky quotient.  
	\end{proof}
	\subsection{Polarization on $M_{\rm slc}(d,\Phi_c,\Gamma,\sigma)$}\label{section_polarization_moduli}
	In this section, we consider certain natural ample $\bQ$-line bundles on $M_{\rm slc}(d, \Phi_c, \Gamma, \sigma)$. Their constructions are implicitly described in the proof of \cite[Theorem 1.14]{Birkar2022}, relying on Koll\'ar's ampleness criterion \cite{Kollar1990}.  
	
	Fix the data $d, \Phi_c, \Gamma, \sigma$. Since $\sM_{\rm slc}(d, \Phi_c, \Gamma, \sigma)$ is of finite type, there exist constants  
	$$
	(a, r, j) \in \bQ^{\geq 0} \times (\bZ^{>0})^2,
	$$  
	depending only on $d, \Phi_c, \Gamma, \sigma$, such that every $(d, \Phi_c, \Gamma, \sigma)$-stable minimal model $(X, B), A$ satisfies the following conditions (cf. \cite[Lemma 10.2]{Birkar2022}):  
	\begin{itemize}  
		\item $(X, B + aA)$ is an slc pair,  
		\item $r(K_X + B + aA)$ is a very ample integral Cartier divisor with  
		$$
		H^i(X, \sO_X(k r(K_X + B + aA))) = 0, \quad \forall i > 0, \forall k > 0,  
		$$  
		\item the embedding $X \hookrightarrow \bP(H^0(X, r(K_X + B + aA)))$ is defined by equations of degree $\leq j$, and  
		\item the multiplication map  
		$$
		S^j(H^0(X, \sO_X(r(K_X + B + aA)))) \to H^0(X, \sO_X(j r(K_X + B + aA)))
		$$  
		is surjective.  
	\end{itemize}
	\begin{defn}
		A tuple $(a, r, j) \in \bQ^{\geq 0} \times (\bZ^{>0})^2$ that satisfies the conditions above is referred to as a \emph{$(d, \Phi_c, \Gamma, \sigma)$-polarization datum}.
	\end{defn}
	Let $(a, r, j) \in \bQ^{\geq 0} \times (\bZ^{>0})^2$ be a $(d, \Phi_c, \Gamma, \sigma)$-polarization datum. Let $(X, B), A \to S$ be a family of $(d, \Phi_c, \Gamma, \sigma)$-stable minimal models. Then $f_\ast(r(K_{X/S} + B + aA))$ is locally free and commutes with arbitrary base changes. Therefore, the assignment  
	$$
	f: (X, B), A \to S \in \sM_{\rm slc}(d, \Phi_c, \Gamma, \sigma)(S) \mapsto f_\ast(r(K_{X/S} + B + aA))
	$$  
	defines a locally free coherent sheaf on the stack $\sM_{\rm slc}(d, \Phi_c, \Gamma, \sigma)$, denoted by $\Lambda_{a,r}$. Let $\lambda_{a,r} := \det(\Lambda_{a,r})$. Since $\sM_{\rm slc}(d, \Phi_c, \Gamma, \sigma)$ is Deligne-Mumford, some power $\lambda_{a,r}^{\otimes k}$ descends to a line bundle on $M_{\rm slc}(d, \Phi_c, \Gamma, \sigma)$. For this reason, we regard $\lambda_{a,r}$ as a $\bQ$-line bundle on $M_{\rm slc}(d, \Phi_c, \Gamma, \sigma)$.
	\begin{prop}\label{prop_ample_line_bundle_moduli}  
		Let $(a, r, j) \in \bQ^{\geq 0} \times (\bZ^{>0})^2$ be a $(d, \Phi_c, \Gamma, \sigma)$-polarization datum. Then $\lambda_{a,r}$ is ample on $M_{\rm slc}(d, \Phi_c, \Gamma, \sigma)$.  
	\end{prop}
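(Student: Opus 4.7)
The plan is to apply Koll\'ar's ampleness criterion \cite{Kollar1990} to the polarization datum $(a,r,j)$, following the GIT strategy indicated in the proof of Theorem \ref{thm_moduli_stable_var}. By that construction, $\sM_{\rm slc}(d,\Phi_c,\Gamma,\sigma)$ is realized as the stacky quotient $[P/\mathrm{PGL}_{n+1}(\bC)]$, where $P$ is a locally closed subscheme of a Hilbert scheme parametrizing $\bP^n$-embedded stable minimal models via the complete linear system $|r(K_X+B+aA)|$ with $n+1 = h^0(X, r(K_X+B+aA))$, and on which $\mathrm{PGL}_{n+1}(\bC)$ acts with finite stabilizers. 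The coarse moduli space $M_{\rm slc}(d,\Phi_c,\Gamma,\sigma)$ is the corresponding GIT quotient.

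First I would confirm that $\Lambda_{a,r}$ is well-defined as a locally free sheaf on the stack, so that $\lambda_{a,r} = \det \Lambda_{a,r}$ is a genuine $\bQ$-line bundle on the coarse moduli space. The vanishing $H^i(X,\sO_X(kr(K_X+B+aA))) = 0$ for $i>0$, $k>0$ built into the polarization data, combined with cohomology and base change, shows that $f_\ast(r(K_{X/S}+B+aA))$ is locally free of rank $n+1$ and commutes with arbitrary base change over any family $(X,B),A \to S$ of stable minimal models; this is the content of the discussion preceding the proposition.

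The heart of the argument is to verify the two hypotheses of Koll\'ar's criterion. The first is semipositivity: for every proper family $(X,B),A \to C$ of stable minimal models over a smooth projective curve $C$, the sheaf $\Lambda_{a,r}|_C$ is nef, hence $\deg(\lambda_{a,r}|_C) \geq 0$. This is a Fujita--Kawamata--Viehweg-type semipositivity statement for pushforwards of relative log-pluricanonical sheaves, available in the slc setting through the work of Koll\'ar and Fujino, with the variation of Hodge structure on the smooth locus of the universal family providing the essential Hodge-theoretic input. The second hypothesis is the existence of enough $\mathrm{PGL}_{n+1}$-invariant sections on symmetric powers of $\Lambda_{a,r}$ to separate non-isomorphic orbits: by the polarization data, the multiplication map $S^j(H^0(X, r(K_X+B+aA))) \to H^0(X, jr(K_X+B+aA))$ is surjective and the image of $X$ in $\bP^n$ is cut out by equations of degree $\leq j$, so the Hilbert point (and hence the isomorphism class) of $(X,B),A$ is recovered from the $\mathrm{PGL}_{n+1}$-orbit of a quotient in a Grassmannian built from $S^j\Lambda_{a,r}$.

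The main obstacle I anticipate is the semipositivity step in the slc setting with the mixed boundary $B + aA$. Classical Fujita-type positivity applies cleanly to $f_\ast \omega_{X/C}^{\otimes r}$ for smooth families, and its extension to slc families with boundary divisors requires controlling the degenerations of the associated variation of Hodge structure across the non-klt locus; this is precisely the slc semipositivity machinery used throughout \cite[\S 10]{Birkar2022}. Once nefness of $\Lambda_{a,r}|_C$ is secured, Koll\'ar's ampleness lemma packages the orbit-separation statement above into strict positivity of $\lambda_{a,r}$ on every non-isotrivial curve, and ampleness of $\lambda_{a,r}$ on $M_{\rm slc}(d,\Phi_c,\Gamma,\sigma)$ then follows from the Seshadri criterion applied to the projective good coarse moduli space supplied by Theorem \ref{thm_moduli_stable_var}.
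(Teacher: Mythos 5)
Your argument follows essentially the same route as the paper: both invoke Koll\'ar's ampleness lemma from \cite[\S 2.9]{Kollar1990} and reduce the problem to nefness of $f_\ast(r(K_{X/C}+B+aA))$ over smooth projective curves, which the paper attributes to Fujino \cite{Fujino2018} and Kov\'acs--Patakfalvi \cite{Kovacs2017}. Your elaboration of the orbit-separation hypothesis and the GIT set-up fills in details that the paper simply delegates to Koll\'ar's original argument, so the two proofs match in substance.
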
  	
	\begin{proof}  
		By the same arguments as in \cite[\S 2.9]{Kollar1990}. It suffices to show that $f_\ast(r(K_{X/S} + B + aA))$ is nef when $S$ is a smooth projective curve. This was established by Fujino \cite{Fujino2018} and Kov\'acs-Patakfalvi \cite{Kovacs2017}.  
	\end{proof}  
	\subsection{Higgs sheaves associated to an admissible stable family}\label{section_VZHiggs_stable_family}
	The objective of this section is to define the notion of an admissible family of stable minimal models and to review the Viehweg-Zuo Higgs sheaf associated with such families (Theorem \ref{thm_big_Higgs_sheaf}). Intuitively, the admissibility condition implies that the family admits an equisingular birational model.
	\begin{defn}\label{defn_semipair}
		A \emph{semi-pair} $(X, \Delta)$ consists of a reduced scheme of finite type over ${\rm Spec}(\bC)$, of pure dimension, together with a $\bQ$-divisor $\Delta\geq 0$ on $X$, satisfying the following conditions:  
		\begin{enumerate}  
			\item $X$ is an $S_2$-scheme with nodal singularities in codimension one,  
			\item no component of ${\rm Supp}(\Delta)$ is contained in the singular locus of $X$,  
			\item $K_X + \Delta$ is $\bQ$-Cartier.  
		\end{enumerate}  
		A \emph{projective semi-pair} is a semi-pair $(X, \Delta)$ where $X$ is a projective scheme.
	\end{defn}
	\begin{defn}
		Let $(X, \Delta)$ be a projective semi-pair. A morphism between projective semi-pairs $\pi: (X', \Delta') \to (X, \Delta)$ is said to be a \textit{semi-log resolution} if $\pi: X' \to X$ is a semi-log resolution of singularities (\cite{Bierstone2013} or \cite[\S 10.4]{Kollar2013}) such that there exists a $\pi$-exceptional $\bQ$-divisor $E \geq 0$ on $X'$ satisfying the relation $K_{X'} + \Delta' = \pi^{\ast}(K_X + \Delta) + E$.
	\end{defn}
	\begin{defn}[Simple normal crossing family]
		Let $S$ and $X$ be reduced schemes of finite type over $\operatorname{Spec}(\mathbb{C})$, and let $D \geq 0$ be a Weil $\bQ$-divisor on $X$. Let $f: X \to S$ be a morphism. The morphism $f: (X, D) \to S$ is said to be \emph{simple normal crossing over $S$} at a point $x \in X$ if there exists a Zariski open neighborhood $U$ of $x$ in $X$ that can be embedded into a scheme $Y$, which is smooth over $S$. In this embedding, $Y$ admits a regular system of parameters $(z_1, \dots, z_p, y_1, \dots, y_r)$ over $S$ at the point corresponding to $x = 0$, such that $U$ is defined by the monomial equation $z_1 \cdots z_p = 0$ and  
		\[ D|_U = \sum_{i=1}^r a_i (y_i = 0)|_U, \quad \text{where } a_i \geq 0, \]  
		over $S$.
		
		The pair $f: (X, D) \to S$ is said to be a \emph{simple normal crossing family over $S$} if it satisfies the simple normal crossing condition over $S$ at every point of $X$. A family is referred to as a \emph{log smooth family over $S$} if it is a simple normal crossing family over $S$ and each fiber $X_s := f^{-1}(s)$, for $s \in S$, is smooth (not necessarily connected).
		
		In the special case where $S = \operatorname{Spec}(\mathbb{C})$, the pair $(X, D)$ is referred to as a \emph{simple normal crossing pair} if $(X, D)$ forms a simple normal crossing family over $\operatorname{Spec}(\mathbb{C})$. In this context, $X$ has Gorenstein singularities and possesses an invertible dualizing sheaf $\omega_X$. The canonical divisor $K_X$ is defined up to linear equivalence via the isomorphism $\omega_X \simeq \sO_X(K_X)$.
		
		A simple normal crossing pair $(X, D)$ is called a \emph{log smooth pair} if $X$ is smooth (note that $X$ is not necessarily connected). In this case, the support $\operatorname{Supp}(D)$ has simple normal crossings.
	\end{defn}
	\begin{defn}
		Let $(X_1, \Delta_1)$ and $(X_2, \Delta_2)$ be two projective semi-pairs over a variety $S$. A rational map $f: (X_1, \Delta_1) \dasharrow (X_2, \Delta_2)$ over $S$ is said to be a \textit{log birational map over $S$} if there exists a common semi-log resolution $\pi_i: (X', \Delta') \to (X_i, \Delta_i)$, $i = 1, 2$, over $S$ such that $f = \pi_2 \circ \pi_1^{-1}$. We say that a morphism $f: (X, \Delta) \to S$ admits a \textit{simple normal crossing log birational model} if there exists a simple normal crossing family $f': (X', \Delta') \to S$ and a log birational map $\alpha:(X, \Delta) \dasharrow (X', \Delta')$ over $S$. If moreover $\alpha_s:(X_s,\Delta_s)\dasharrow (X'_s,\Delta'_s)$ is a log birational map for every $s\in S$, we say that $f: (X, \Delta) \to S$ admits a \textit{simple normal crossing strictly log birational model}.
	\end{defn}
	\begin{defn}\label{defn_admissible}
		Let $f: (X, B), A \to S$ be a family of stable minimal models over a variety $S$. The morphism $f$ is said to be \emph{birationally admissible} (resp. \emph{strictly birationally admissible}) if $(X, B + A) \to S$ admits a log smooth log birational model (resp. log smooth strictly log birational model).
	\end{defn}
	A typical example of a strictly admissible family of stable minimal models is a family $f: (X, B), A \to S$ such that $(X, B + A) \to S$ admits a simultaneous resolution. More precisely, there exists a semi-log resolution $(X', \Delta') \to (X, B + A)$ satisfying the following conditions: $(X', \Delta') \to S$ is a simple normal crossing family, and for each $s \in S$, the induced morphism $(X'_s, \Delta'_s) \to (X_s, B_s + A_s)$ is a semi-log resolution.
	\begin{thm}\label{thm_big_Higgs_sheaf}
		Let $f^o: (X^o, B^o), A^o \to S^o$ be a birationally admissible family of $(d, \Phi_c, \Gamma, \sigma)$-stable minimal models over a smooth quasi-projective variety $S^o$, defining a generically finite morphism $\xi^o: S^o \to \sM_{\rm slc}(d, \Phi_c, \Gamma, \sigma)$. Let $S$ be a smooth projective variety containing $S^o$ as a Zariski open subset such that $D := S \setminus S^o$ is a reduced simple normal crossing divisor, and $\xi^o$ extends to a morphism $\xi: S \to M_{\rm slc}(d, \Phi_c, \Gamma, \sigma)$. Let $\sL$ be a line bundle on $S$. Then there exist the following data:  
		\begin{enumerate}
			\item An algebraic subset $Z \subset S$ of codimension $\geq 2$ such that $(U := S \setminus Z, U \cap D)$ is a log smooth pair, and a simple normal crossing divisor $E \subset U$ containing $U \cap D$.
			\item A graded $\bQ$-polarized admissible variation of mixed Hodge structure $\bV$ on $U \setminus E$, which in turn gives rise to the lower canonical system of logarithmic Higgs bundles $(\widetilde{H} = \bigoplus_{p\leq w} \widetilde{H}^{p}, \theta)$ on $(U,E)$ (Definition \ref{defn_LHB_ass_VHSBM}).
		\end{enumerate}
		These data satisfy the following conditions:  
		\begin{enumerate}
			\item There exists a natural inclusion $\sL|_U \subset \widetilde{H}^{w}$.  
			\item Let $(\bigoplus_{p\geq0} L^p, \theta)$ be the logarithmic Higgs subsheaf generated by $L^0 := \sL|_U$, where $L^p \subset \widetilde{H}^{w-p}$. Then the logarithmic Higgs field  
			$$
			\theta: L^p \to L^{p+1} \otimes \Omega_U(\log E)
			$$  
			is holomorphic over $U\setminus D$ for each $p\geq 0$, i.e.,  
			$$
			\theta(L^p) \subset L^{p+1} \otimes \Omega_U(\log D \cap U),\quad p\geq 0.
			$$  
		\end{enumerate}
	\end{thm}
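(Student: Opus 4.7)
The plan is to adapt the Viehweg-Zuo Higgs sheaf construction to families of stable minimal models. First, using the birational admissibility of $f^o$, one has a simple normal crossing log birational model $f': (X', \Delta') \to S^o$ of $(X^o, B^o + A^o) \to S^o$. After removing a codimension $\geq 2$ subset from $S$ (absorbed into $Z$), one may spread this out to a simple normal crossing family over a big open $U := S \setminus Z$ whose discriminant is contained in $U \cap D$. The ample line bundle $\lambda_{a,r}$ on $M_{\rm slc}(d,\Phi_c,\Gamma,\sigma)$ from Proposition \ref{prop_ample_line_bundle_moduli}, combined with the nefness results of Fujino and Kov\'acs-Patakfalvi used in its proof, gives weak positivity of $f'_\ast(\omega_{X'/U}^{\otimes kr}(k\Delta'))$ for $k \gg 0$, which is the input needed for the cyclic cover construction.

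Next, fix a large integer $n$ and, after twisting by a sufficiently positive power of $\sL$, produce an effective divisor $\Sigma$ on $X'$ whose class is $n$-divisible and of the form $n\sigma \equiv f'^\ast \sL^{-1} \otimes (\text{pluri-log-canonical bundle})^{\otimes m}$. Arrange $\Sigma$ so that $\Sigma + \Delta'$ has simple normal crossings transversal to the fibers over $U \setminus (U \cap D)$. Form the $n$-fold cyclic cover $\psi : Y \to X'$ branched along $\Sigma + \Delta'$, then take an equisingular semi-log resolution $\widetilde{Y} \to Y$ so that $h : \widetilde{Y} \to U$ is a log smooth family over the complement of a simple normal crossing divisor $E \supset U \cap D$. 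The middle direct images $R^kh_\ast \bQ$ carry a natural graded $\bQ$-polarized variation of mixed Hodge structure $\bV$ on $U \setminus E$, with the weight filtration induced by the stratification of the snc boundary.

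Admissibility of $\bV$ with respect to $U$ follows from its geometric origin via Kashiwara \cite{Kashiwara1986}: the monodromy around each component of $E$ is quasi-unipotent, a relative weight filtration exists, and the Hodge filtration extends to subbundles of the lower canonical extension as in Proposition \ref{prop_Kashiwara_extendHodge}. Applying the construction of Section \ref{section_lHB_BM} produces the logarithmic Higgs bundle $(\widetilde{H} = \bigoplus_{p \leq w} \widetilde{H}^p, \theta)$ on $(U,E)$. Tracing the cyclic cover construction and taking the $(n-1)/n$-eigenspace for the Galois action yields a direct summand of $R^w h_\ast \sO$ isomorphic (after twist) to $\sL|_U$, realizing the required inclusion $\sL|_U \hookrightarrow \widetilde{H}^w$.

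The main technical obstacle is assertion (2): while $(\widetilde{H}, \theta)$ a priori has poles along the full boundary $E$, the Higgs subsheaf $\bigoplus L^p$ generated by $L^0 = \sL|_U$ is required to have poles only along $D \cap U$. The key point is that the extra components $E \setminus D$ come from the branch of $\Sigma$ and the cyclic cover, not from degenerations of the moduli map — indeed along these components the family $f^o$ remains birationally equisingular, which is exactly what birational admissibility guarantees. Consequently the Kodaira-Spencer-type map induced by $\theta$ on the subsheaf $L^p$ factors through the logarithmic tangent sheaf of the moduli-discriminant $D$ only; formalizing this via local coordinate computation on the cover $\widetilde{Y}\to U$, in the spirit of \cite{VZ2003, PTW2019, DLSZ}, is the crux of the argument and completes the proof.
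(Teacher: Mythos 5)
A preliminary remark: the paper you are working from does not actually prove Theorem \ref{thm_big_Higgs_sheaf}. Section \ref{section_VZHiggs_stable_family} explicitly says its purpose is to \emph{review} the Viehweg--Zuo Higgs sheaf associated with admissible families, and the theorem is imported from the first paper of the series \cite{stjc2025}. So there is no in-paper proof to compare against line by line; the relevant benchmark is the Viehweg--Zuo-type construction of \cite{VZ2003,PTW2019,Deng2022} that the paper cites, which is indeed the route you sketch.

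Against that benchmark, your outline is the right one (spreading out the snc birational model over a big open $U$, weak positivity of the pluri-log-canonical pushforwards, a cyclic cover branched along a member of a pluricanonical system twisted by $\sL^{-1}$, a semi-log resolution producing a geometric, hence admissible, graded-polarized VMHS, and the eigensheaf inclusion of $\sL|_U$ into the top Hodge piece). But the proof as written has a genuine gap exactly where you yourself locate "the crux": assertion (2), that the Higgs subsheaf generated by $L^0=\sL|_U$ has log poles only along $D\cap U$ rather than along all of $E$. You offer a heuristic (the extra components of $E$ come from the branch divisor, along which the family is birationally equisingular) but never carry out the argument. The standard mechanism here is not a bare coordinate computation on $\widetilde{Y}$: one must construct a comparison morphism from the deformation-theoretic Higgs sheaf of the \emph{original} family — built from iterated Kodaira--Spencer maps $\bigoplus_q R^q f'_\ast \wedge^{p} T_{X'/U}(-\log\Delta')\otimes(\cdots)$, whose Higgs field has poles only along the discriminant $D\cap U$ — to the Hodge-theoretic Higgs bundle $(\widetilde H,\theta)$ of the cyclic cover, compatibly with the two Higgs fields and with the inclusion of $\sL|_U$. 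The containment $\theta(L^p)\subset L^{p+1}\otimes\Omega_U(\log D\cap U)$ is then inherited through this comparison. In the mixed/slc setting this comparison is precisely the delicate new input (it must be made compatible with the weight filtration and with the semi-log resolution), and it is the content that \cite{stjc2025} supplies. Without constructing that morphism, condition (2) remains an assertion, not a proof.

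A smaller inaccuracy: the inclusion $\sL|_U\subset\widetilde H^{w}$ places $\sL$ in the \emph{top} Hodge graded piece, i.e.\ in (a quotient of) $h_\ast\Omega^{w}_{\widetilde Y/U}(\log)$, not in "a direct summand of $R^{w}h_\ast\sO$", which is the opposite end $\widetilde H^{0}$ of the Hodge filtration. This is a slip in the sketch rather than a fatal error, but it should be corrected before the eigensheaf bookkeeping is done in earnest.
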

	\subsection{Hyperbolicity properties for admissible families of stable minimal models}
	\begin{defn}
		Let $X$ be a projective variety, and let $Z \subset X$ be a closed algebraic subset. The pair $(X, Z)$ is called a \emph{Picard pair} if, for any holomorphic map $\gamma: \Delta^\ast \to X$ from the punctured unit disc $\Delta^\ast$, either $0 \in \overline{\gamma^{-1}(Z)}$, or $\gamma$ can be extended to a holomorphic map $\overline{\gamma}: \Delta \to X$. A quasi-projective $Y$ is called \emph{Picard hyperbolic} if for any projective variety $\overline{Y}$ containing $Y$ as a dense Zariski open subset, $(\overline{Y},\overline{Y}\backslash Y)$ is a Picard pair.
	\end{defn}
	The classical Big Picard theorem is equivalent to the statement that $\bP^1\backslash\{0, 1, \infty\}$ is a Picard hyperbolic.
	\begin{defn}
		A complex space $X$ is called \emph{Brody hyperbolic} if there is no non-constant holomorphic map $\bC\to X$.
	\end{defn}
	\begin{defn}[Javanpeykar-Kucharczyk \cite{Javanpeykar2020}]
		An algebraic variety $X$ is called \emph{Borel hyperbolic} if any holomorphic map from a finite type reduced scheme to $X$ is algebraic.
	\end{defn}
	\begin{prop}\label{prop_BPT_BH}
		Let $(X,Z)$ be a Picard pair. Then $X\backslash Z$ is Borel hyperbolic and Brody hyperbolic.
	\end{prop}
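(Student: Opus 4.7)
The plan is to prove Borel hyperbolicity and Brody hyperbolicity separately, each exploiting the Picard pair hypothesis through a different standard device.

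For \emph{Borel hyperbolicity}, I would follow the strategy of Javanpeykar \cite[Theorem 3.13]{Javanpeykar2020}. Given a holomorphic map $h \colon T^{\rm an} \to (X\setminus Z)^{\rm an}$ from a finite-type reduced scheme $T$, first reduce to the case where $T$ is a smooth quasi-projective variety, by applying resolution of singularities and passing to irreducible components. Next, compactify $T$ to a smooth projective variety $\bar T$ such that $D := \bar T \setminus T$ is a simple normal crossing divisor. Viewing $h$ as a holomorphic map $T \to X\setminus Z \hookrightarrow X$ into the projective variety $X$, it suffices to extend $h$ to a meromorphic map $\bar h \colon \bar T \dashrightarrow X$: the closure of its graph in $\bar T \times X$ is then a closed analytic subset of a projective variety, hence algebraic by Chow's theorem, and so $h$ is algebraic. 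The extension across $D$ is carried out curve-by-curve. Near any point $p \in D$, choose local coordinates $(z_1, \ldots, z_n)$ so that $D = \{z_1 \cdots z_r = 0\}$; for any analytic disc transverse to $D$ at $p$, the restriction of $h$ is a holomorphic map $\Delta^\ast \to X\setminus Z$ with empty preimage of $Z$, which extends to $\Delta \to X$ by the Picard pair property. A Hartogs/Thullen--Remmert-type extension theorem for meromorphic maps into projective targets then assembles these one-dimensional extensions into the required global meromorphic map $\bar h$.

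For \emph{Brody hyperbolicity}, I would invoke the classical equivalence between the Picard pair property and hyperbolic embedding, due to Kiernan--Kobayashi (see Kobayashi's \emph{Hyperbolic Complex Spaces}, Ch.~III). The Picard pair condition, applied to holomorphic maps $\gamma \colon \Delta^\ast \to X\setminus Z$ (for which $\gamma^{-1}(Z) = \emptyset$), asserts that every such $\gamma$ extends to $\bar\gamma \colon \Delta \to X$; this is exactly Kiernan--Kobayashi's criterion for $X\setminus Z$ to be hyperbolically embedded in the compact complex space $X$. A hyperbolically embedded subspace is Kobayashi hyperbolic, and a Kobayashi hyperbolic space admits no non-constant holomorphic map from $\bC$ (the Kobayashi pseudodistance on $\bC$ vanishes identically, while that on $X\setminus Z$ is a genuine distance). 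Therefore $X\setminus Z$ is Brody hyperbolic.

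The main technical obstacle is the Borel step: combining one-variable Picard pair extensions into a global meromorphic extension across a simple normal crossing boundary requires non-trivial extension theorems for meromorphic mappings to compact K\"ahler/projective targets. For Brody, once the Kiernan--Kobayashi equivalence is identified, the conclusion is formal.
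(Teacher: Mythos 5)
The paper's own proof is a one-line citation to Javanpeykar--Kucharczyk \cite[Theorem 3.13]{Javanpeykar2020}, so you are reconstructing the argument from scratch. Your Borel hyperbolicity step follows the same outline as the cited theorem (reduce to a smooth source, compactify with SNC boundary, extend disc-by-disc across the boundary using the Picard pair hypothesis on transverse discs, and conclude algebraicity via Chow's theorem applied to the graph closure). The outline is the right one, though the step you flag as ``the main technical obstacle'' --- assembling the one-variable extensions into a global meromorphic extension across an SNC boundary --- is precisely the content of Javanpeykar's theorem and is not actually carried out.

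The Brody hyperbolicity step has a genuine gap. You assert that the Picard pair condition ``is exactly Kiernan--Kobayashi's criterion'' for $X \setminus Z$ to be hyperbolically embedded in $X$. The Kiernan--Kobayashi/Kwack theorem gives only one implication: hyperbolic embedding implies the big Picard extension property. The converse is not a standard theorem and you do not justify it; hyperbolic embedding is a condition on limits of \emph{sequences} of discs and is in general strictly stronger than the extension property for individual maps $\Delta^* \to X \setminus Z$. Fortunately the conclusion is recoverable without hyperbolic embedding. Directly: if $f\colon \bC \to X \setminus Z$ were non-constant holomorphic, then $h(z) := f(\exp(1/z))\colon \Delta^* \to X\setminus Z$ satisfies $h^{-1}(Z)=\emptyset$, yet $h$ cannot extend holomorphically across $0$ since its cluster set there contains $\overline{f(\bC^*)}$, which is not a single point; this contradicts the Picard pair hypothesis. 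Alternatively, Borel hyperbolicity already implies Brody hyperbolicity: a holomorphic $f\colon \bC \to X\setminus Z$ must be algebraic, and if it were non-constant then $f \circ \exp\colon \bC \to X\setminus Z$ would be holomorphic but non-algebraic (any non-constant rational pullback becomes $2\pi i$-periodic after precomposing with $\exp$), again a contradiction.
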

	\begin{proof}
		See \cite[Theorem 3.13]{Javanpeykar2020}.
	\end{proof}
	Readers are referred to \cite{Lang1987,   Javanpeykar2020} for further details on the relationships between various hyperbolicity conditions.
	\begin{thm}\label{thm_main_proof}
		Let $f^o:(X^o,B^o),A^o\to S^o$ be a strictly admissible family of $(d,\Phi_c,\Gamma,\sigma)$ stable minimal models over a quasi-projective variety $S^o$ which defines a quasi-finite morphism $\xi^o:S^o\to M_{\rm slc}(d,\Phi_c,\Gamma,\sigma)$.
		$S^o$ is Picard hyperbolic, Borel hyperbolic and Brody hyperbolic.
	\end{thm}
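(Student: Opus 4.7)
My plan is to deduce Picard hyperbolicity from the Hodge-theoretic big Picard theorem (Theorem \ref{thm_Hodge_Picard}); the Borel and Brody statements will then follow formally via Proposition \ref{prop_BPT_BH}. First I would fix a smooth projective compactification $S \supset S^o$ with $D := S \setminus S^o$ simple normal crossing, arranged so that the classifying map $\xi^o$ extends to $\xi: S \to M_{\rm slc}(d,\Phi_c,\Gamma,\sigma)$. Choosing a polarization datum $(a,r,j)$ and setting $\sL := \xi^*(\lambda_{a,r}^{\otimes m})$ for a sufficiently divisible $m$, I obtain a big and nef line bundle on $S$: bigness comes from quasi-finiteness of $\xi^o$ combined with ampleness of $\lambda_{a,r}$ on $M_{\rm slc}$ (Proposition \ref{prop_ample_line_bundle_moduli}), while nefness is automatic for the pullback of an ample bundle. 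Applying Theorem \ref{thm_big_Higgs_sheaf} with this $\sL$ then supplies, on a big open $U \subset S$, a graded polarized admissible variation of mixed Hodge structure and a Higgs subsheaf generated by $\sL|_U$ whose field has logarithmic poles along $U \cap D$.

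The main argument would proceed by induction on $\dim S^o$. Given any $\gamma: \Delta^* \to \overline{S^o}$ with $\gamma(\Delta^*) \subset S^o$, I would reduce to the smooth compactification $S$ and consider the Zariski closure $V \subset S$ of $\gamma(\Delta^*)$. If $\dim V < \dim S^o$, I would restrict the family to $V \cap S^o$ and pass to a semi-log resolution of the base change; the resulting family should be strictly birationally admissible with quasi-finite classifying map over a base of strictly smaller dimension, and the inductive hypothesis then extends $\gamma$. If $\dim V = \dim S^o$, then $\gamma$ is Zariski-dense in a component of $S$, and I would invoke Theorem \ref{thm_Hodge_Picard}. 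To meet the ampleness hypothesis of Definition \ref{defn_VMHS_maxvar}, I would apply Kodaira's lemma to write $\sL^{\otimes N} \sim H + E'$ with $H$ ample and $E'$ effective; the Higgs subsheaf generated by $H|_U$ sits inside the one generated by $\sL^{\otimes N}|_U$, hence still has logarithmic poles along $D$, and after a further blowup to make $D \cup E'$ simple normal crossing the hypotheses of Theorem \ref{thm_Hodge_Picard} are met.

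The step I expect to be the main obstacle is verifying that strict birational admissibility descends to a subvariety after semi-log resolution of the base. One must show that a simultaneous simple normal crossing log birational model of $(X, B+A) \to S$ pulls back to a simultaneous simple normal crossing log birational model over (a semi-log resolution of) $V \cap S^o$; this is precisely what the word \emph{strictly} in the hypothesis encodes, since the fibrewise log birational condition is what survives base change together with functorial semi-log desingularization. Without strictness the boundary behaviour of the model could degenerate over $V$ and the induction would collapse; this explains why the ``strictly'' hypothesis, rather than mere birational admissibility, is built into the theorem.
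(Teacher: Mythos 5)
Your proposal is correct and follows essentially the same route as the paper: reduce to the Zariski closure of the image of $\gamma$, desingularize, invoke Theorem \ref{thm_big_Higgs_sheaf} to produce an admissible variation of mixed Hodge structure of maximal variation, conclude via Theorem \ref{thm_Hodge_Picard}, and deduce Borel and Brody hyperbolicity from Proposition \ref{prop_BPT_BH}. The paper streamlines two of your steps: the induction is unnecessary (restricting to the Zariski closure immediately places you in the Zariski-dense case, since the restricted family is still birationally admissible with generically finite classifying map), and the big-and-nef/Kodaira-lemma detour is avoided because Theorem \ref{thm_big_Higgs_sheaf} accepts an arbitrary line bundle $\sL$, so one simply takes $L$ ample on the desingularized closure.
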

	\begin{proof}
		Let $S$ be a projective variety containing $S^o$ as a Zariski open subset. 
		Let $\gamma: \Delta^\ast \to S$ be a holomorphic map such that $0 \notin \overline{\gamma^{-1}(S \setminus S^o)}$. We aim to demonstrate that $\gamma$ extends holomorphically to $0 \in \Delta$. By shrinking $\Delta$, we may assume that ${\rm Im}(\gamma) \subset S^o$.  
		
		Take $B \subset S$ as the Zariski closure of ${\rm Im}(\gamma)$. Let $\pi: B' \to B$ be a desingularization such that $\pi$ is biholomorphic over $B_{\rm reg} \cap S^o$, $\pi^{-1}(B \setminus (B_{\rm reg} \cap S^o))$ is a simple normal crossing divisor on $B'$ and classifying map can be extended to a morphism $S \to M_{\text{slc}}(d, \Phi_c, \Gamma, \sigma)$. Since $\pi$ is a proper map, $\gamma$ can be lifted to $\gamma': \Delta^\ast \to \pi^{-1}(B \cap S^o)$. It suffices to show that $\gamma'$ extends to a holomorphic map $\overline{\gamma'}: \Delta \to B'$. By taking the base change of $f^o$ via $\pi^{-1}(B \cap S^o) \to S^o$, we may proceed under the following assumptions without loss of generality.
		\begin{framed}
			Let $S$ be a smooth projective variety, and let $D := S \setminus S^o$ be a simple normal crossing divisor on $S$. Suppose $\gamma: \Delta^\ast \to S \setminus D$ is a holomorphic map such that $\text{Im}(\gamma)$ is Zariski dense in $S$. Additionally, there exists an admissible family of $(d, \Phi_c, \Gamma, \sigma)$-stable minimal models over $S^o$, whose classifying map is generically finite and can be extended to a morphism $S \to M_{\text{slc}}(d, \Phi_c, \Gamma, \sigma)$.
		\end{framed}
		By selecting $L$ in Theorem \ref{thm_big_Higgs_sheaf} as an ample line bundle on $S$, there exists a closed algebraic subset $E \supseteq D$, together with a graded $\bR$-polarized variation of mixed Hodge structures on $S \setminus E$. This variation is admissible with respect to $S$ and exhibits maximal variation with respect to the pair $(S, D)$.  
		
		In light of Theorem \ref{thm_Hodge_Picard}, the map $\gamma$ extends holomorphically to $\overline{\gamma}: \Delta \to S$. Consequently, this establishes that $(S, S \setminus S^o)$ forms a Picard pair. Furthermore, the assertion that $S^o$ is both Borel hyperbolic and Brody hyperbolic follows directly from Proposition \ref{prop_BPT_BH}.
	\end{proof}
    \section{Hyperbolicity properties for Deligne-Mumford stacks}
    For the purposes of this paper, algebraic stacks are all assume to be \emph{of finite type} over an algebraic closed field of characteristic 0, i.e. an algebraic stack is defined as a stack that can be represented as the quotient stack of a groupoid of algebraic spaces $s,t: R \to S$ of finite type over ${\rm Spec}(\bC)$, with smooth structure morphisms $s,t$.      
    Let $\sX$ be an algebraic stack. We denote by $\sX^{\rm an}$ its associated complex analytic stack (\cite{Noohi2005,Behrend2006}).
	\subsection{Hyperbolicity of Deligne-Mumford stacks}
	\begin{defn}
		A reduced Deligne-Mumford stack $\sX$ admitting a quasi-projective coarse moduli space is said to be \emph{schematic Borel hyperbolic} (resp. \emph{schematic Brody hyperbolic}) if for every quasi-finite morphism $X\to\sX$ from a reduced quasi-projective scheme $X$, $X$ is Borel hyperbolic (resp. Borel hyperbolic). 
		
		Let $\mathscr{X}$ be a reduced Deligne-Mumford stack admitting a projective coarse moduli space, and let $\mathscr{Z} \subset \mathscr{X}$ denote a closed substack. The pair $(\mathscr{X}, \mathscr{Z})$ is said to be a \emph{schematic Picard pair} if for every morphism $\eta: X \to \mathscr{X}$ such that $X$ is a reduced projective scheme, the preimage $U := \eta^{-1}(\mathscr{X} \setminus \mathscr{Z})$ is dense in $X$, and the restriction $\eta|_U: U \to \mathscr{X} \setminus \mathscr{Z}$ is a quasi-finite morphism, the pair $(X, X \setminus U)$ forms a Picard pair.
	\end{defn}
	\begin{defn}
		Let $\sX$ be a reduced Deligne-Mumford stack, and let $\sZ \subset \sX$ be a closed substack. The pair $(\sX, \sZ)$ is called a \emph{Picard pair} if the following condition holds: given a holomorphic map $\gamma: \Delta^\ast \to \sX^{\rm an}$, either $0 \in \overline{\gamma^{-1}(\sZ)}$, or there exists some $n$ such that $\gamma \circ \iota_n$ can be extended to a holomorphic map $\overline{\gamma \circ \iota_n}: \Delta \to \sX^{\rm an}$, where $\iota_n: \Delta^\ast \to \Delta^\ast$ is defined by $\iota_n(z) = z^n$.
	\end{defn}
	Let $\mu_n = \{e^{\frac{2\pi\sqrt{-1}k}{n}} : k = 0, 1, \dots, n-1\}$ act on $\Delta$ by multiplication. Then the stacky quotient $[\Delta / \mu_n]$ is a unit disc extending $\Delta^\ast$, equipped with a stacky structure at the origin. The pair $(\sX, \sZ)$ is a \emph{Picard pair} if and only if for every holomorphic map $\gamma: \Delta^\ast \to \sX^{\rm an}$, either $0 \in \overline{\gamma^{-1}(\sZ)}$, or there exists some $n \in \bZ_{>0}$ such that $\gamma$ can be extended to a holomorphic map $\overline{\gamma}: [\Delta / \mu_n] \to \sX^{\rm an}$.
	\begin{defn}\label{defn_log_uniformisable}
		Let $\sX$ be a reduced Deligne-Mumford stack admitting a projective coarse moduli space, and let $\sZ \subset \sX$ be a closed substack. The pair $(\sX, \sZ)$ is said to be \emph{logarithmically uniformisable} if there exists a surjective morphism $\eta: X \to \sX$ such that $X$ is a reduced projective scheme and $\eta$ is a finite \'etale morphism over $\sX \setminus \mathscr{Z}$.
	\end{defn}
    When $\sX$ is a moduli stack, the covering space $X$ introduced in Definition \ref{defn_log_uniformisable} is expected to be a fine moduli space, whose objects are those in $\sX$ equipped with a certain level structure (see Proposition \ref{prop_moduli_curve_uniformisable}).
	\begin{prop}\label{prop_schematic_Picard}
		Let $\mathscr{X}$ be a reduced Deligne-Mumford stack admitting a projective coarse moduli space, and let $\mathscr{Z} \subset \mathscr{X}$ denote a closed substack. Suppose that the pair $(\mathscr{X}, \mathscr{Z})$ is logarithmically uniformisable and forms a schematic Picard pair. Then $(\mathscr{X}, \mathscr{Z})$ is Picard hyperbolic.
	\end{prop}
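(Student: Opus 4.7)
The plan is to use the logarithmic uniformization $\eta\colon X\to\sX$ to lift a given holomorphic $\gamma$ to the projective scheme $X$, then invoke the schematic Picard pair hypothesis to extend the lift to the full disc, and finally push that extension down to $\sX$ via $\eta$.

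Concretely, given a holomorphic map $\gamma\colon\Delta^\ast\to\sX^{\rm an}$ with $0\notin\overline{\gamma^{-1}(\sZ)}$, I will first shrink $\Delta$ so that $\gamma(\Delta^\ast)\subset(\sX\setminus\sZ)^{\rm an}$. Since $\eta$ is finite \'etale over $\sX\setminus\sZ$, the analytic fiber product
$$P:=\bigl(X\setminus\eta^{-1}(\sZ)\bigr)^{\rm an}\times_{(\sX\setminus\sZ)^{\rm an}}\Delta^\ast$$
is a finite \'etale covering of $\Delta^\ast$ in the analytic category. By the standard classification of such covers, every connected component has the form $\iota_n\colon\Delta^\ast\to\Delta^\ast$, $z\mapsto z^n$, for some $n\geq 1$. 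Picking any connected component of $P$ therefore produces a positive integer $n$ and a holomorphic lift $\tilde\gamma\colon\Delta^\ast\to(X\setminus\eta^{-1}(\sZ))^{\rm an}$ satisfying $\eta\circ\tilde\gamma=\gamma\circ\iota_n$.

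Next, I will apply the schematic Picard pair hypothesis to $\eta\colon X\to\sX$. If $\eta^{-1}(\sX\setminus\sZ)$ is not already dense in $X$, I will first discard the irreducible components of $X$ that lie entirely in $\eta^{-1}(\sZ)$; the resulting subscheme is still reduced and projective, and the restriction of $\eta$ to the preimage of $\sX\setminus\sZ$ remains finite \'etale, hence quasi-finite. The schematic Picard pair hypothesis then yields that $(X,\eta^{-1}(\sZ))$ is a Picard pair. Because $\tilde\gamma(\Delta^\ast)\subset X\setminus\eta^{-1}(\sZ)$, we certainly have $0\notin\overline{\tilde\gamma^{-1}(\eta^{-1}(\sZ))}$, so $\tilde\gamma$ extends to a holomorphic map $\overline{\tilde\gamma}\colon\Delta\to X^{\rm an}$. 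Composing with $\eta^{\rm an}$ and gluing with $\gamma\circ\iota_n$ on the part of $\Delta^\ast$ outside the shrunken disc produces the desired holomorphic extension of $\gamma\circ\iota_n$ to all of $\Delta$.

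The main technical input is the analytic classification of finite \'etale covers of the punctured disc, which automatically supplies the root cover $\iota_n$ needed to lift $\gamma$; the remainder is a formal consequence of the two standing hypotheses. The likeliest pitfall, though a minor one, is verifying that $\eta^{-1}(\sX\setminus\sZ)$ is dense in $X$ before invoking the schematic Picard pair property, which I plan to handle by the component-discarding step described above, and checking that the fiber product of stacks used to form $P$ behaves analytically as expected, which follows from $\eta$ being representable and \'etale away from $\sZ$.
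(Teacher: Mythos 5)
Your proposal is correct and follows essentially the same route as the paper: lift $\gamma$ through the finite \'etale cover $\eta$ over $\sX\setminus\sZ$, identify the resulting cover of $\Delta^\ast$ with some $\iota_n$, extend the lift using the schematic Picard pair hypothesis applied to $(X,X\setminus\eta^{-1}(\sX\setminus\sZ))$, and push down via $\eta$. Your extra care about discarding components of $X$ contained in $\eta^{-1}(\sZ)$ so that the density condition in the schematic Picard pair definition is met is a minor refinement the paper leaves implicit, not a different argument.
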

	\begin{proof}
		Let $\eta: X \to \mathscr{X}$ be a surjective morphism such that $X$ is a reduced projective scheme and $\eta$ is a finite \'etale morphism over $\mathscr{U} := \mathscr{X} \setminus \mathscr{Z}$. Let $U := \eta^{-1}(\mathscr{U})$. Consider a holomorphic map $f: \Delta^* \to \mathscr{X}^{\rm an}$ such that $0 \notin \overline{f^{-1}(\mathscr{Z})}$. By shrinking $\Delta^*$, we may assume that $f(\Delta^*) \subset \mathscr{U}^{\rm an}$. Consider the associated base change diagram:
		$$
		\xymatrix{
			Y \ar[r]^-{g} \ar[d] & U^{\rm an} \subset X^{\rm an} \ar[d]^{\eta} \\
			\Delta^* \ar[r]^-{f} & \mathscr{U}^{\rm an} \subset \mathscr{X}^{\rm an}.
		}
		$$
		Since $Y \to \Delta^*$ is a finite \'etale morphism, $Y$ is a disjoint union of copies of $\Delta^*$, where each component is mapped to $\Delta^*$ via $\iota_n(z) = z^n$ for some positive integer $n$. Selecting one such component yields a morphism $g: \Delta^* \to U^{\rm an}$. Since $(\mathscr{X}, \mathscr{Z})$ is a schematic Picard pair, the pair $(X, X \setminus U)$ forms a Picard pair. Consequently, the morphism $g: \Delta^* \to U^{\rm an}$ extends to a holomorphic map $\overline{g}: \Delta \to X^{\rm an}$. By composing with $\eta$, we obtain the holomorphic extension $\eta \circ \overline{g}: \Delta \to \mathscr{X}^{\rm an}$. This completes the proof.
	\end{proof}
	\begin{defn}
		A reduced Deligne-Mumford stack $\sX$ is said to be \emph{Borel hyperbolic} if every holomorphic map from a reduced algebraic stack to $\sX$ is algebraic.
	\end{defn}
	\begin{defn}
	A reduced Deligne-Mumford stack $\sX$ is called \emph{uniformisable} if there exists a finite \'etale surjective morphism $X\to\sX$, where $X$ is a scheme.
    \end{defn}
	\begin{prop}\label{prop_schematic_Borel}
		Let $\mathscr{X}$ be a uniformisable and reduced Deligne-Mumford stack that admits a quasi-projective coarse moduli space. Suppose that $\mathscr{X}$ is schematically Borel hyperbolic. Then $\mathscr{X}$ is Borel hyperbolic.
	\end{prop}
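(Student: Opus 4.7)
The plan is to reduce the question to the schematic Borel hyperbolicity of a uniformising cover, using the Riemann existence theorem to algebraise analytic finite étale covers and étale descent together with GAGA to pass algebraicity back down.

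First, I would fix a finite étale surjection $\eta \colon X \to \sX$ from a scheme $X$, provided by uniformisability, and check that $X$ is a reduced quasi-projective scheme. Indeed, the composition $X \to \sX \to M$ with the coarse moduli map is proper (by Keel--Mori) and quasi-finite, hence finite, and $M$ is quasi-projective by hypothesis; reducedness of $X$ follows from that of $\sX$ via the étale map $\eta$. Applying the schematic Borel hyperbolicity hypothesis to the quasi-finite morphism $\eta$ would then yield that $X$ itself is Borel hyperbolic as a scheme, so every holomorphic map from a finite type reduced scheme to $X^{\rm an}$ is algebraic.

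Next, let $f \colon \sY^{\rm an} \to \sX^{\rm an}$ be a holomorphic map from a reduced algebraic stack $\sY$ of finite type. Choose an étale atlas $V \to \sY$ by a reduced finite type scheme $V$. I would form the analytic fibre product
\[
W^{\rm an} := V^{\rm an} \times_{\sX^{\rm an}} X^{\rm an} \longrightarrow V^{\rm an},
\]
which is a finite étale cover equipped with a holomorphic map to $X^{\rm an}$. By Riemann's existence theorem for the finite type scheme $V$, this analytic cover algebraises to a finite étale morphism of schemes $W \to V$, with $W$ reduced and of finite type. Borel hyperbolicity of $X$ would then guarantee that the holomorphic map $W^{\rm an} \to X^{\rm an}$ is the analytification of an algebraic morphism $\psi \colon W \to X$; composition with $\eta$ produces an algebraic morphism $\eta \circ \psi \colon W \to \sX$ whose analytification coincides with $W^{\rm an} \to V^{\rm an} \to \sX^{\rm an}$.

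Finally, I would descend. Since $W \to V$ is a finite étale surjection and $\sX$ is a stack in the étale topology, étale descent for morphisms into $\sX$ applied to $\eta \circ \psi$ produces an algebraic morphism $V \to \sX$ provided the two pullbacks to $W \times_V W$ agree; they agree analytically by construction, and GAGA for morphisms into the separated DM stack $\sX$ upgrades this to algebraic agreement. A further descent along the atlas $V \to \sY$ then yields the required algebraic morphism $\sY \to \sX$ whose analytification is $f$. The main technical point to verify carefully is the compatibility between Riemann existence, GAGA for DM stacks and étale descent; the individual ingredients are all classical once the source has been reduced to a finite type scheme via the atlas.
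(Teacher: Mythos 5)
Your proposal is correct and follows essentially the same strategy as the paper: uniformise by $\eta\colon X\to\sX$, pull the cover back along a presentation of the source, algebraize the resulting finite \'etale cover by the Riemann existence theorem, apply the schematic hypothesis to the covering scheme $X$, and descend. Two points of comparison. First, the paper organizes the descent through groupoid presentations: it also invokes Borel hyperbolicity of $R=X\times_{\sX}X$ (again from the schematic hypothesis, since $R\to\sX$ is quasi-finite) in order to algebraize the induced map on arrows, and it runs a two-step induction (algebraic-space sources first, then stacky sources) because the relevant atlases are only algebraic spaces at the second stage. You instead algebraize only the map on objects $W\to X$ and verify the descent datum over $W\times_V W$ directly; this is cleaner, but the step ``analytic agreement of the two pullbacks implies algebraic agreement'' is precisely where the paper uses Borel hyperbolicity of $R$, so you should justify it -- e.g.\ the $2$-isomorphism is an analytic section of the Isom-scheme, which is finite and unramified over the reduced base because $\sX$ is a separated Deligne--Mumford stack, and such sections are automatically algebraic; the cocycle condition then also transfers. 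Second, a small slip: in the paper's conventions the source $\sY$ is a general reduced algebraic stack (smooth structure morphisms), so it need not admit an \'etale atlas; take a smooth atlas $V\to\sY$ instead -- nothing else in your argument uses \'etaleness of the atlas. Your explicit verification that $X$ is reduced and quasi-projective (finite over the quasi-projective coarse space) is a hypothesis-check the paper leaves implicit but which is genuinely needed to apply the schematic hypothesis.
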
 
	\begin{proof}
		Let $f: X \to \sX$ be a finite \'etale covering by a reduced projective scheme $X$. Let $R = X \times_{\sX} X$, and let $p_1, p_2: R \to X$ denote the two projection maps. Then $\sX \simeq [X/R]$, the stacky quotient. Let $\sY$ be a reduced algebraic stack. We may choose a groupoid algebraic space $s', t': R' \to Y'$ such that $\sY \simeq [Y'/R']$ as stacks. 
		The spaces and morphisms fit into the following commutative diagram:
		$$
		\xymatrix{
			R'^{\rm an} \ar[r]^h \ar@<2pt>[d]\ar@<-2pt>[d] & R^{\rm an} \ar@<2pt>[d]\ar@<-2pt>[d] \\
			Y'^{\rm an} \ar[r]^{g'} \ar[d] & X^{\rm an} \ar[d]\\
			\sY^{\rm an} \ar[r]^g & \sX^{\rm an}.
		}
		$$
		Such an $Y'$ and $g'$ can be constructed as follows. Let $Y'' = X^{\rm an} \times_{\mathscr{X}^{\rm an}} \mathscr{Y}^{\rm an}$, then the base change morphism $Y'' \to \mathscr{Y}^{\rm an}$ is finite \'etale. Hence, by the Riemann extension theorem, $Y''$ is an algebraic stack and the morphism $Y'' \to \mathscr{Y}^{\rm an}$ is algebraic. Now let $Y' \to Y''$ be a smooth surjective morphism from a scheme $Y'$, and we obtain the desired diagram.		
		
		By assumption on $\sX$, $X$ and $R$ are Borel hyperbolic.
		
		{\bf Case I:} Assume that $\sY$ is an algebraic space. Then $Y'$ and $R'$ are reduced schemes of finite type over ${\rm Spec}(\bC)$. By the Borel hyperbolicity of $S$ and $R$, it follows that the holomorphic maps $h$ and $g'$ are algebraic. Consequently, $g$ is algebraic.
		
		{\bf Case II:} Assume that $\sY$ is an algebraic stack. Then $Y'$ and $R'$ are reduced algebraic spaces of finite type over ${\rm Spec}(\bC)$. By the Borel hyperbolicity of $S$ and $R$, and the result established in {\bf Case I}, it follows that the holomorphic maps $h$ and $g'$ are algebraic. Consequently, $g$ is algebraic.
	\end{proof}
    The following Brody hyperbolicity condition for Deligne-Mumford stacks is suggested to me by Javanpeykar.
	\begin{defn}
		Let $\mathscr{X}$ be a reduced Deligne-Mumford stack and let $\iota: \mathscr{X} \to X$ denote the natural map to its coarse moduli space $X$. The stack $\mathscr{X}$ is said to be \emph{Brody hyperbolic} if for any holomorphic map $f: \mathbb{C} \to \mathscr{X}^{\rm an}$, the composition $\iota^{\rm an} \circ f: \mathbb{C} \to X^{\rm an}$ is a constant map.
	\end{defn}
	\begin{prop}\label{prop_schematic_Brody}
		Let $\mathscr{X}$ be a uniformisable and reduced Deligne-Mumford stack that admits a quasi-projective coarse moduli space. Suppose that $\mathscr{X}$ is schematically Brody hyperbolic. Then $\mathscr{X}$ is Brody hyperbolic.
	\end{prop}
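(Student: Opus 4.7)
The plan is to lift a holomorphic map $f\colon \mathbb{C}\to \mathscr{X}^{\rm an}$ to the uniformising cover and then invoke schematic Brody hyperbolicity there. Since $\mathscr{X}$ is uniformisable, fix a finite étale surjective morphism $p\colon Y\to\mathscr{X}$ with $Y$ a (reduced) scheme. First I would verify that $Y$ is quasi-projective so that the schematic Brody hypothesis applies: the Keel–Mori theorem gives a proper coarse moduli morphism $\iota\colon\mathscr{X}\to X$, and since $p$ is finite and $\iota$ is proper and quasi-finite, the composite $q := \iota\circ p\colon Y\to X$ is finite; as $X$ is quasi-projective, $Y$ is quasi-projective too. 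Moreover $p$ is quasi-finite (being finite étale), so the hypothesis of schematic Brody hyperbolicity applies to the morphism $p\colon Y\to\mathscr{X}$, yielding that $Y$ is Brody hyperbolic.

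Next I would form the analytic base change
\begin{equation*}
\widetilde{\mathbb{C}} := \mathbb{C}\times_{\mathscr{X}^{\rm an}} Y^{\rm an}\longrightarrow \mathbb{C}.
\end{equation*}
Since $p^{\rm an}$ is finite étale, so is this projection; because $\mathbb{C}$ is simply connected, $\widetilde{\mathbb{C}}$ is a disjoint union of copies of $\mathbb{C}$, each mapping biholomorphically to $\mathbb{C}$. Picking any component, I obtain a holomorphic lift $\widetilde{f}\colon \mathbb{C}\to Y^{\rm an}$ satisfying $p^{\rm an}\circ \widetilde{f}=f$.

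By Brody hyperbolicity of $Y$, the lift $\widetilde{f}$ is constant. Therefore the composition
\begin{equation*}
\iota^{\rm an}\circ f \;=\; \iota^{\rm an}\circ p^{\rm an}\circ \widetilde{f} \;=\; q^{\rm an}\circ \widetilde{f}\colon\; \mathbb{C}\longrightarrow X^{\rm an}
\end{equation*}
is constant, which is exactly the Brody hyperbolicity of the stack $\mathscr{X}$.

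The only genuinely non-trivial point is the quasi-projectivity of the uniformising scheme $Y$, which, as noted, follows from Keel–Mori together with the standing assumption that the coarse moduli space is quasi-projective; everything else is a formal application of the simple connectivity of $\mathbb{C}$ together with the schematic hypothesis. The argument is parallel to, but strictly easier than, the proof of Proposition \ref{prop_schematic_Borel}, since for Brody hyperbolicity we only need to lift a single holomorphic map and not a whole stacky diagram.
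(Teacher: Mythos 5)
Your proposal is correct and follows essentially the same approach as the paper: lift the holomorphic map $f\colon\mathbb{C}\to\mathscr{X}^{\rm an}$ through the finite étale uniformising cover using the simple connectivity of $\mathbb{C}$, then apply schematic Brody hyperbolicity to conclude the lift is constant. You have actually added a small but useful detail that the paper elides, namely the verification that the uniformising scheme $Y$ is quasi-projective (via Keel--Mori and the finiteness of $\iota\circ p$), which is needed for the schematic hypothesis to apply; you also streamline by omitting the groupoid $R=S\times_{\mathscr{X}}S$ and its base change, which the paper introduces but does not actually use in this particular argument.
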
 
	\begin{proof}
		Let $\iota: \mathscr{X} \to X$ denote the natural map to the coarse moduli space $X$.
		Let $f: S \to \sX$ be a finite \'etale covering by a scheme $S$. Let $R = S \times_{\sX} S$, and let $p_1, p_2: R \to S$ denote the two projection maps. Then $\sX \simeq [S/R]$, the stacky quotient. Consider the base change diagram:
		$$
		\xymatrix{
			R'\ar[r]^h \ar@<2pt>[d]\ar@<-2pt>[d] & R^{\rm an} \ar@<2pt>[d]\ar@<-2pt>[d] \\
			Y'\ar[r]^{g'} \ar[d] & S^{\rm an} \ar[d]\\
			\bC \ar[r]^g & \sX^{\rm an}.
		}
		$$
		Since $Y' \to \bC$ is a finite \'etale morphism, $Y'$ is a disjoint union of copies of $\bC$, where each component is mapped to $\bC$ via the identity map. Selecting one such component yields a morphism $g': \bC \to S^{\rm an}$. Since $S$ is Brody hyperbolic by the assumption that $\sX$ is schematic Brody hyperbolic. Thus $g'$ is a constant map. Consequently, $\iota^{\rm an}\circ g$ is also a constant map.
	\end{proof}
    \subsection{Stratified hyperbolicity of $\sM_{\rm slc}(d,\Phi_c,\Gamma,\sigma)$}\label{section_stack}
    In \cite{stjc2025}, we introduced the concept of a birationally admissible stratification of $\sM_{\rm slc}(d, \Phi_c, \Gamma, \sigma)$. Let $f: (\sX, \sB), \sA \to \sM_{\rm slc}(d, \Phi_c, \Gamma, \sigma)$ denote the universal family of $(d, \Phi_c, \Gamma, \sigma)$-stable minimal models.  
    A \emph{birationally admissible stratification} of $\sM_{\rm slc}(d, \Phi_c, \Gamma, \sigma)$ is a filtration by Zariski-closed, reduced substacks:  
    $$
    \emptyset = S_{-1} \subset S_0 \subset \cdots \subset S_N = \sM_{\rm slc}(d, \Phi_c, \Gamma, \sigma),
    $$  
    such that for each $i = 0, \dots, N$, the pullback family of $f$ to $S_i \setminus S_{i-1}$ is strictly birationally admissible. A stratum is defined as a connected component of $S_i \setminus S_{i-1}$ for some $i \in \{0, \dots, N\}$. Given that the moduli space of smooth varieties may itself lack smoothness, we do not impose the requirement that a birationally admissible stratification must be regular, meaning that every stratum is smooth. 
    
    A birationally admissible stratification always exists but is typically not unique; it is determined by the choice of a functorial desingularization functor (\cite[\S 6]{stjc2025}).
    
    A direct consequence of Theorem \ref{thm_main_proof} is the following:    
    \begin{cor}
    	Let $\sS$ be a stratum of a birationally admissible stratification of $\sM_{\rm slc}(d, \Phi_c, \Gamma, \sigma)$. Then $\sS$ is schematic Picard hyperbolic, schematic Borel hyperbolic, and schematic Brody hyperbolic.
    \end{cor}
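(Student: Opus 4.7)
The plan is to reduce the corollary directly to Theorem \ref{thm_main_proof} by a base-change argument. First I would fix a stratum $\sS$ of a birationally admissible stratification and take an arbitrary quasi-finite morphism $\eta: S \to \sS$ from a reduced quasi-projective scheme $S$. Composing with the inclusion $\sS \hookrightarrow \sM_{\rm slc}(d, \Phi_c, \Gamma, \sigma)$ yields a quasi-finite classifying map $\xi^o: S \to \sM_{\rm slc}(d, \Phi_c, \Gamma, \sigma)$, and pulling back the universal family along $\eta$ produces a family $(X^o, B^o), A^o \to S$ of $(d, \Phi_c, \Gamma, \sigma)$-stable minimal models inducing $\xi^o$.

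The key step is to verify that this pulled-back family is strictly birationally admissible in the sense of Definition \ref{defn_admissible}. By the very definition of a birationally admissible stratification, the restriction of the universal family to $\sS$ admits, after passing to a smooth atlas, a log smooth strictly log birational model. I would pull this model back along $\eta$; if the result fails to be simple normal crossing over $S$, I would apply a functorial semi-log desingularization procedure (as in \cite{Bierstone2013} or \cite[\S 10.4]{Kollar2013}) to replace it by an SNC model. Since such a desingularization is functorial and preserves the log birational class fibrewise, the output is a log smooth strictly log birational model of $(X^o, B^o + A^o) \to S$, confirming that the pulled-back family is strictly birationally admissible. Theorem \ref{thm_main_proof} then applies directly to $\xi^o$ and gives that $S$ is Picard, Borel, and Brody hyperbolic; since $\eta$ was arbitrary, the three schematic hyperbolicity properties for $\sS$ follow from their definitions.

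The main obstacle is the base-change step in the middle paragraph: strict birational admissibility combines a fibrewise log birational condition with a global simple normal crossing condition, and the latter is not automatically preserved under arbitrary morphisms between reduced schemes. The argument relies essentially on the functoriality of semi-log desingularization and on tracking how the log smooth model over $\sS$ behaves under $\eta$, which is the same ingredient used in \cite{stjc2025} to construct birationally admissible stratifications in the first place; once this is in place, the remainder of the proof is a formal unwinding of the definitions of schematic Picard, Borel, and Brody hyperbolicity.
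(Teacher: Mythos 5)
Your proposal is correct and matches the paper's (unstated) argument exactly: the paper presents this corollary as a ``direct consequence'' of Theorem \ref{thm_main_proof}, and your unwinding --- pull back the universal family along the quasi-finite map $S\to\sS$, check strict birational admissibility, apply the theorem componentwise --- is precisely the intended reduction. The only superfluous worry is the base-change step: the simple-normal-crossing-over-$S$ condition is defined via a relative regular system of parameters and is therefore stable under arbitrary base change of reduced schemes, so no auxiliary desingularization is needed.
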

    By combining Proposition \ref{prop_schematic_Picard}, Proposition \ref{prop_schematic_Borel}, and Proposition \ref{prop_schematic_Brody}, we obtain the following result.
    \begin{cor}\label{cor_log_uniform_hyperbolic}
    	Let $\sS$ be a stratum of a birationally admissible stratification of $\sM_{\rm slc}(d, \Phi_c, \Gamma, \sigma)$. Suppose that $\sS$ is uniformisable. Then $\sS$ is both Borel hyperbolic and Brody hyperbolic. If $(\overline{\sS}, \overline{\sS} \setminus \sS)$ is logarithmically uniformisable, then $(\overline{\sS}, \overline{\sS} \setminus \sS)$ forms a Picard pair.
    \end{cor}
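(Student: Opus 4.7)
The plan is to combine the three structural propositions \ref{prop_schematic_Picard}, \ref{prop_schematic_Borel}, and \ref{prop_schematic_Brody} with the schematic hyperbolicity already established for strata in the preceding corollary. No new hyperbolicity argument is required here; the proof is a packaging statement that moves from the schematic versions of the three hyperbolicity notions to the genuine stacky versions, using (log) uniformisability as the bridge.

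First I would verify that the hypotheses of the three propositions are satisfied. Propositions \ref{prop_schematic_Borel} and \ref{prop_schematic_Brody} require $\sS$ to be a reduced Deligne-Mumford stack admitting a quasi-projective coarse moduli space. Reducedness is built into the definition of a birationally admissible stratification (as a filtration by Zariski-closed reduced substacks). The coarse moduli space of $\sS$ is the image of $\sS$ in $M_{\rm slc}(d,\Phi_c,\Gamma,\sigma)$, a locally closed subscheme of the projective scheme of Theorem \ref{thm_moduli_stable_var}, hence quasi-projective. For Proposition \ref{prop_schematic_Picard}, applied to $(\overline{\sS}, \overline{\sS}\setminus\sS)$, the coarse moduli space of $\overline{\sS}$ is closed in $M_{\rm slc}(d,\Phi_c,\Gamma,\sigma)$ and therefore projective, as required.

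Next I would feed in the schematic hyperbolicity inputs from the corollary preceding the statement: $\sS$ is schematic Borel hyperbolic, schematic Brody hyperbolic, and $(\overline{\sS}, \overline{\sS}\setminus\sS)$ is a schematic Picard pair. Under the hypothesis that $\sS$ is uniformisable, Proposition \ref{prop_schematic_Borel} promotes schematic Borel hyperbolicity of $\sS$ to Borel hyperbolicity of $\sS$, and Proposition \ref{prop_schematic_Brody} promotes schematic Brody hyperbolicity of $\sS$ to Brody hyperbolicity of $\sS$. Under the additional hypothesis that the pair $(\overline{\sS}, \overline{\sS}\setminus\sS)$ is logarithmically uniformisable, Proposition \ref{prop_schematic_Picard} upgrades the schematic Picard pair property to the assertion that $(\overline{\sS}, \overline{\sS}\setminus\sS)$ is a Picard pair, completing the proof. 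There is no substantial obstacle: every step is the direct invocation of a previously established result, and the only point meriting explicit mention is the quasi-projectivity (respectively projectivity) of the coarse moduli space of $\sS$ (respectively $\overline{\sS}$), which is immediate from Theorem \ref{thm_moduli_stable_var}.
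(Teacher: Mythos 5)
Your proposal is correct and follows exactly the paper's own route: the paper proves this corollary by directly combining Propositions \ref{prop_schematic_Picard}, \ref{prop_schematic_Borel}, and \ref{prop_schematic_Brody} with the schematic hyperbolicity of $\sS$ established in the preceding corollary. Your added verification of the hypotheses (reducedness and the quasi-projectivity, resp.\ projectivity, of the coarse moduli spaces via Theorem \ref{thm_moduli_stable_var}) is a harmless elaboration of details the paper leaves implicit.
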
    
    We are interested in the corollaries because of the following proposition.
    \begin{prop}\label{prop_moduli_curve_uniformisable}
    	Let $\sS$ be a stratum of the canonical stratification of $\overline{\sM}_{g,n}$. Then the pair $(\overline{\sS},\overline{\sS}\backslash \sS)$ is logarithmically uniformisable. In particular $\sS$ is uniformisable.
    \end{prop}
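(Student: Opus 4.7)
The plan is to realize $\overline{\sS}$ as a finite quotient stack of a product of smaller Deligne--Mumford stacks $\overline{\sM}_{g_v,n_v}$ via the clutching morphism, and then to lift each factor to a smooth projective scheme using admissible level structures. First, I would describe the stratum $\sS$ via its stable dual graph $\Gamma$, whose vertices $v$ carry a genus $g_v$ and valency $n_v$ (counting both the original marked points and the half-edges incident to $v$), each satisfying the stability condition $2g_v-2+n_v>0$. The clutching morphism
$$
\xi_\Gamma:\prod_{v\in V(\Gamma)}\overline{\sM}_{g_v,n_v}\longrightarrow\overline{\sM}_{g,n}
$$
is representable and finite, its schematic image is exactly $\overline{\sS}$, and over the open stratum it is an $\mathrm{Aut}(\Gamma)$-Galois étale cover, reflecting the presentation $\sS\cong [\prod_v\sM_{g_v,n_v}/\mathrm{Aut}(\Gamma)]$.

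Next, I would invoke the classical fact that for every $(g',n')$ with $2g'-2+n'>0$ there exists a smooth projective scheme $Y_{g',n'}$ together with a finite surjective morphism $Y_{g',n'}\to\overline{\sM}_{g',n'}$ that is étale over $\sM_{g',n'}$. Such a $Y_{g',n'}$ is provided by the fine moduli space of stable curves equipped with an admissible $G$-level structure (for instance $G=(\bZ/\ell\bZ)^{2g'}$ with $\ell\geq 3$), as developed in \cite[Chapter XVI]{Griffiths2011}. I then form the product
$$
X:=\prod_{v\in V(\Gamma)}Y_{g_v,n_v},
$$
which is again a smooth projective scheme, and compose to obtain
$$
\eta:X\longrightarrow\prod_{v\in V(\Gamma)}\overline{\sM}_{g_v,n_v}\stackrel{\xi_\Gamma}{\longrightarrow}\overline{\sS}.
$$

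The morphism $\eta$ is finite and surjective by composition of finite surjective morphisms. Moreover, each of the two arrows is étale over its respective open locus (the first over $\prod_v\sM_{g_v,n_v}$, the second over $\sS$ by the description above), so $\eta$ is finite étale over $\sS$. This exhibits $(\overline{\sS},\overline{\sS}\setminus\sS)$ as logarithmically uniformisable, and the restriction $\eta^{-1}(\sS)\to\sS$ is a finite étale morphism from an open subscheme of $X$, yielding the ``in particular'' assertion. The principal obstacle is the second ingredient: one must cite the existence and properties of the admissible level-structure covers $Y_{g',n'}$, which rests on the full admissible-$G$-cover machinery for stable curves (smoothness of the compactified level space, finiteness of the forgetful map, and étaleness over the open part). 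Once this is granted, the remainder is a routine stack-theoretic assembly of the clutching morphism with the factorwise covers.
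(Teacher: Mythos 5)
Your argument is correct, but it takes a genuinely different route from the paper's. The paper applies the admissible-$G$-cover construction once, to the ambient stack: it takes the finite surjection $\eta\colon {}_{G}\overline{M}_{g,n}\to\overline{\sM}_{g,n}$ and sets $\overline{S}:=\eta^{-1}(\overline{\sS})_{\mathrm{red}}$; the substance of the proof is then the local coordinate description of $\eta$ along the boundary (in suitable coordinates $\eta^\ast(w_i)=z_i^{k}$ in the boundary directions and the identity elsewhere), from which one reads off that $\overline{S}$ is a smooth projective variety and that $\eta|_{\overline{S}}$ is \'etale over $\sS$. You instead decompose $\overline{\sS}$ via the clutching morphism $\xi_\Gamma\colon\prod_v\overline{\sM}_{g_v,n_v}\to\overline{\sM}_{g,n}$ and pull back level covers factor by factor. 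This avoids the local ramification analysis entirely, at the cost of invoking the identification $\sS\cong\bigl[\prod_v\sM_{g_v,n_v}/\mathrm{Aut}(\Gamma)\bigr]$, the finiteness of $\xi_\Gamma$ with image $\overline{\sS}$, and the equality $\xi_\Gamma^{-1}(\sS)=\prod_v\sM_{g_v,n_v}$; these are standard (see \cite[Chapter~XII]{Griffiths2011}) but should be cited explicitly, since they carry the same weight in your argument that the local description \cite[Remark~(9.4), Chapter~XVI]{Griffiths2011} carries in the paper's. Both proofs ultimately rest on the same input, namely a reduced projective scheme finite and surjective over $\overline{\sM}_{g',n'}$ and \'etale over $\sM_{g',n'}$. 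One small caution: your parenthetical choice $G=(\bZ/\ell\bZ)^{2g'}$ is not the group $G_{p,q}$ used in \cite[Chapter~XVI]{Griffiths2011}; abelian level structures do suffice for Definition \ref{defn_log_uniformisable} (which asks only for a reduced projective scheme, not a smooth one), but if you want the compactified level covers to be smooth as in the cited reference, you need their specific nonabelian $G$.
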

    \begin{proof}
    	The desired covering is provided by the fine moduli space of admissible $G$-covers of stable curves. We adopt the notations from \cite[Chapter VXI]{Griffiths2011}. Let $G = G_{p,q}$ denote the group constructed in \cite[Chapter VXI, \S 6]{Griffiths2011}, and let ${_{G}}\overline{M}_{g,n}$ represent the fine moduli space of the relevant admissible $G$-covers. Then, ${_{G}}\overline{M}_{g,n}$ is a smooth projective variety, and there exists a finite surjective morphism $\eta: {_{G}}\overline{M}_{g,n} \to \overline{\sM}_{g,n}$.
    	
    	This morphism $\eta$ admits the following local description (\cite[Remark (9.4)]{Griffiths2011}). Consider a point $p \in {_{G}}\overline{M}_{g,n}$, and let $B$ be an analytic neighborhood of $\eta(p)$ in $\overline{\sM}_{g,n}$ (called the "base of the Kuranishi family" of the marked stable curve representing $\eta(p)$ in loc. cit.), equipped with holomorphic coordinates $z_1, \dots, z_m$, where $m = 3g - 3 + n > 0$. In this coordinate system, the boundary $\partial\sM_{g,n}$ is defined by the equation $z_1 \cdots z_r = 0$, and the subspace $\overline{\sS}$ is defined by $z_a = z_{a+1} = \cdots = z_b = 0$ for some indices satisfying $1 \leq a \leq b \leq m$. There exists an analytic neighborhood $\widetilde{B}$ of $p$ in ${_{G}}\overline{M}_{g,n}$, together with holomorphic coordinates $w_1, \dots, w_m$, such that the morphism $\eta$ is given by
    	$$\eta^\ast(w_1)=z_1^k,\quad\dots\quad, \eta^\ast(w_r)=z_r^k,\quad \eta^\ast(w_{r+1})=z_{r+1}, \quad\dots\quad, \eta^\ast(w_{m})=z_{m}$$
    	for some integer $k \geq 1$.
    	
    	From this local description, it follows that $\overline{S} := \eta^{-1}(\overline{\sS})_{\rm red}$ is a smooth projective variety, and the restriction $\eta|_{\overline{S}}: \overline{S} \to \overline{\sS}$ is proper and \'etale over $\sS$. Consequently, the pair $(\overline{\sS}, \overline{\sS} \setminus \sS)$ is logarithmically uniformisable.
    \end{proof}
    As a direct consequence, we obtain the following result.    
    \begin{cor}
    	Let $\mathscr{S}$ be a stratum of the canonical stratification of $\overline{\mathscr{M}}_{g,n}$. Then the pair $(\overline{\mathscr{S}}, \overline{\mathscr{S}} \setminus \mathscr{S})$ is a Picard pair. Moreover, $\mathscr{S}$ is both Borel hyperbolic and Brody hyperbolic.
    \end{cor}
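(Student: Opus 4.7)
The plan is to assemble the corollary directly from the two preceding results in the section, with no new technical content required. All the genuine analytic and geometric work has already been absorbed into Theorem \ref{thm_main_proof} (which supplies the schematic Picard, Borel, and Brody hyperbolicity via the Hodge-theoretic big Picard theorem and Viehweg-Zuo type Higgs sheaves) and into the stacky transfer principles of Propositions \ref{prop_schematic_Picard}, \ref{prop_schematic_Borel}, and \ref{prop_schematic_Brody}.

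First I would invoke Proposition \ref{prop_moduli_curve_uniformisable}: the fine moduli space ${_{G}}\overline{M}_{g,n}$ of admissible $G$-covers of stable curves provides a finite surjective morphism $\eta:{_{G}}\overline{M}_{g,n}\to \overline{\sM}_{g,n}$ for which the reduced preimage $\overline{S}:=\eta^{-1}(\overline{\sS})_{\rm red}$ is a smooth projective variety and $\eta|_{\overline{S}}$ is \'etale over $\sS$. This simultaneously verifies that $\sS$ is uniformisable and that the pair $(\overline{\sS},\overline{\sS}\setminus\sS)$ is logarithmically uniformisable in the sense of Definition \ref{defn_log_uniformisable}.

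Second, I would recall (as stated in the introduction) that the canonical stratification of $\overline{\sM}_{g,n}$ by topological type of the fibers is a birationally admissible stratification in the sense of Section \ref{section_stack}. Thus $\sS$ falls squarely within the hypotheses of Corollary \ref{cor_log_uniform_hyperbolic}. Applying that corollary, together with the uniformisability and logarithmic uniformisability just established, immediately yields both conclusions: $(\overline{\sS},\overline{\sS}\setminus\sS)$ is a Picard pair, and $\sS$ is Borel hyperbolic and Brody hyperbolic.

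There is no substantive obstacle to overcome; the corollary is essentially a pure bookkeeping step. If anything is worth double-checking it is only the verification that the canonical boundary stratification of $\overline{\sM}_{g,n}$ genuinely qualifies as birationally admissible (so that Corollary \ref{cor_log_uniform_hyperbolic} is applicable), which follows from the fact that each stratum corresponds to a fixed topological type of fiber and the universal family over it admits a simultaneous semi-log resolution compatible with the simple normal crossing structure of $\partial\sM_{g,n}$.
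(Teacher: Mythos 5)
Your proposal is correct and follows exactly the route the paper intends: the corollary is obtained by combining Proposition \ref{prop_moduli_curve_uniformisable} (which supplies both uniformisability of $\sS$ and logarithmic uniformisability of $(\overline{\sS},\overline{\sS}\setminus\sS)$ via the fine moduli space of admissible $G$-covers) with Corollary \ref{cor_log_uniform_hyperbolic}, using the fact that the canonical stratification of $\overline{\sM}_{g,n}$ is birationally admissible. No further comment is needed.
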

    Motivated by Proposition \ref{prop_moduli_curve_uniformisable} and Conjecture 1.6 of Javanpeykar-Sun-Zuo \cite{JSZ2024}, we make the following conjecture.
    \begin{conj}
    	Let $\sS$ be a stratum of a birationally admissible stratification of $\sM_{\rm slc}(d, \Phi_c, \Gamma, \sigma)$. Then the pair $(\overline{\sS},\overline{\sS}\backslash \sS)$ is logarithmically uniformisable.
    \end{conj}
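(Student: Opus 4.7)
The plan is to mimic the construction of admissible $G$-covers from the theory of stable curves (\cite[Chapter XVI]{Griffiths2011}), rigidifying the automorphism groups along the stratum by means of a level structure that remains meaningful across the boundary of $\overline{\sS}$. The upshot would be a fine moduli space of $(d,\Phi_c,\Gamma,\sigma)$-stable minimal models endowed with the chosen level structure, providing the desired logarithmic uniformization.

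First I would work with the universal family $(\sX,\sB),\sA \to \overline{\sS}$, which by the birationally admissible hypothesis admits a simple normal crossing log birational model over $\sS$. Fixing a $(d,\Phi_c,\Gamma,\sigma)$-polarization datum $(a,r,j)$, the locally free sheaf $\Lambda_{a,r} = f_\ast\sO_{\sX}(r(K_{\sX/\overline{\sS}}+\sB+a\sA))$ of Section \ref{section_polarization_moduli} provides a natural projective embedding of each fiber and descends a $\bQ$-polarization $\lambda_{a,r}$ to the coarse moduli. After replacing $r$ by a sufficiently divisible multiple, one seeks a functorial level structure---for instance, a choice of trivialization of a suitable finite \'etale group scheme attached to $(X,B),A$ (such as prime-power torsion of a Picard-type sheaf, or a marking of the components of $\lfloor B\rfloor$ together with a symplectic-type basis modulo $\ell$)---such that the resulting moduli problem has no nontrivial automorphisms over $\sS$. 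This step yields a fine moduli scheme $M^{\rm lev}$ together with a finite \'etale morphism $M^{\rm lev}\to\sS$.

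The second step is compactification. Taking the normalization of $\overline{\sS}$ in the function field of $M^{\rm lev}$ produces a finite surjection $\eta\colon \overline{M}^{\rm lev} \to \overline{\sS}$ from a normal proper algebraic space that restricts to a finite \'etale cover over $\sS$. To confirm that $\overline{M}^{\rm lev}$ is actually a projective \emph{scheme}, one notes that $\eta^{\ast}\lambda_{a,r}$ is ample by Proposition \ref{prop_ample_line_bundle_moduli} combined with the finiteness of $\eta$, giving a natural polarization on the cover.

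The main obstacle is the construction of a level structure that is simultaneously (i) sufficient to kill automorphisms uniformly over the entire stratum $\sS$, and (ii) compatible with one-parameter degenerations, so that the local structure of $\eta$ near any boundary point of $\overline{\sS}$ is modeled on the quotient of a smooth variety by a finite cyclic group---analogous to the Kuranishi-family local model exploited in the proof of Proposition \ref{prop_moduli_curve_uniformisable}. For stable curves this is classical, but for general stable minimal models the automorphism group of $(X,B),A$ need not act faithfully on any a priori natural finite set of data, and the \'etale-local geometry of $\overline{\sS}$ at a stratum boundary is not as explicit as in the Kuranishi picture for $\overline{\sM}_{g,n}$. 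Overcoming this will likely require a careful analysis of the deformation theory of slc pairs equipped with the chosen level structure, combined with a stratum-wise application of the theorems of Keel--Mori and Olsson on coarse moduli of Deligne--Mumford stacks to guarantee schematicness and properness of the resulting cover.
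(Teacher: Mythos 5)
This statement is not a theorem of the paper: it is stated as a \emph{conjecture}, motivated by the case of $\overline{\sM}_{g,n}$ (Proposition \ref{prop_moduli_curve_uniformisable}) and by \cite[Conjecture 1.6]{JSZ2024}, and the paper offers no proof of it. Your text is likewise not a proof but a research program, and its decisive step --- the construction of a functorial level structure on $(d,\Phi_c,\Gamma,\sigma)$-stable minimal models that (i) rigidifies all automorphisms uniformly over $\sS$ and (ii) degenerates well over $\overline{\sS}\setminus\sS$ so that the resulting cover is finite, surjective, and \'etale exactly over $\sS$ --- is left entirely open, as you concede in your final paragraph. None of the candidate structures you list (torsion of a Picard-type sheaf, markings of $\lfloor B\rfloor$, a symplectic basis modulo $\ell$) is shown to be acted on faithfully by the automorphism group of a general slc stable minimal model, and for such pairs there is no known analogue of the admissible-$G$-cover construction of \cite[Chapter XVI]{Griffiths2011}. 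Since this is exactly the content of the conjecture (the paper itself already says the expected cover should be ``a fine moduli space of objects equipped with a certain level structure''), the proposal restates the problem rather than solving it, and the later steps (normalization in the function field of $M^{\rm lev}$, projectivity via ampleness of $\eta^{\ast}\lambda_{a,r}$ from Proposition \ref{prop_ample_line_bundle_moduli}) have nothing to apply to.

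Even granting the missing construction, two further points would need care. First, absence of automorphisms does not by itself give \'etaleness of $M^{\rm lev}\to\sS$: the rigidified moduli problem must also be formally \'etale over the stack, i.e.\ the level structure must deform uniquely; this is part of what the $G$-cover machinery provides for curves and is genuinely unclear for slc pairs. Second, the normalization of the stack $\overline{\sS}$ in a finite extension of its function field is in general again a Deligne--Mumford stack, not a scheme; in the curve case the cover is a scheme because the boundary objects with level structure are themselves automorphism-free and the local model of $\eta$ is the explicit cyclic cover $w_i=z_i^{k}$ of the Kuranishi base. You would need an analogous \'etale-local description of $\overline{\sS}$ along the deeper strata to conclude that $\overline{M}^{\rm lev}$ is a (reduced projective) scheme, and such a description is not available for $\sM_{\rm slc}(d,\Phi_c,\Gamma,\sigma)$. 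In short, the approach is the one the paper itself anticipates, but the gap it identifies is the entire difficulty, and your proposal does not close it.
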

	\bibliographystyle{plain}
	\bibliography{Hyper_Moduli}
\end{document}